\newtheorem{theorem}{Theorem}[section]
\newtheorem{remark}[theorem]{Remark}
\newtheorem{corollary}[theorem]{Corollary}
\newtheorem{proposition}[theorem]{Proposition}
\newtheorem{example}[theorem]{Example}
\title{Spectra and Laplacian spectra of arbitrary powers of lexicographic products of graphs}
\author[1]{Nair  Abreu}
\author[2,3]{Domingos M. Cardoso}
\author[2,3]{Paula Carvalho}
\author[4]{Cybele T. M. Vinagre}
\affil[1]{\small
PEP/COPPE, Universidade Federal do Rio de Janeiro, Rio de Janeiro, Brasil. Email: nairabreunovoa@gmail.com}
\affil[2]{\small Centro de Investiga\c{c}\~{a}o e Desenvolvimento em Matem\'atica e Aplica\c{c}\~{o}es}
\affil[3]{\small
Departamento de Matem\'atica, Universidade de  Aveiro, 3810-193, Aveiro, Portugal. Email: (dcardoso,paula.carvalho)@ua.pt}
\affil[4]{\small
Instituto de Matem\'atica e Estat\'{\i}stica, Universidade Federal Fluminense, Niter\'{o}i, Brasil. Email: cybl@vm.uff.br}
\begin{document}

\maketitle

\begin{abstract}
Consider two graphs $G$  and $H$. Let $H^k[G]$ be  the lexicographic product  of $H^k$ and $G$, where
$H^k$ is the  lexicographic product of the graph $H$ by itself $k$ times. In this paper, we determine the
spectrum of $H^k[G]$ and $H^k$ when $G$ and $H$ are regular and the Laplacian spectrum of $H^k[G]$ and
$H^k$ for $G$ and $H$ arbitrary. Particular emphasis is given to the least eigenvalue of the
adjacency matrix in the case of lexicographic powers of regular graphs, and to the algebraic connectivity
and the largest Laplacian eigenvalues in the case of lexicographic powers of arbitrary graphs. This
approach allows the determination of the spectrum (in case of regular graphs) and Laplacian spectrum
(for arbitrary graphs) of huge graphs. As an example, the spectrum of the lexicographic power of the Petersen
graph with the googol number (that is, $10^{100}$) of vertices is determined. The paper finish with the
extension of some well known spectral and combinatorial invariant properties of graphs to its lexicographic
powers.
\end{abstract}
\noindent \textbf{AMS Subject Classification}: 05C50, 05C76, 15A18.\\
\noindent \textbf{Keywords}: Graph spectra, graph operations, lexicographic product of graphs.

\section{Introduction}
The lexicographic product of a graph $H$ by himself several times is a very special graph product, it is a kind of
fractal graph which reproduces its copy in each of the positions of its vertices and connects all  the vertices of each
copy with another copy when they are placed in positions corresponding to adjacent vertices of $H$. This procedure can be repeated,
reproducing a copy of the previous iterated graph in each of the positions of the vertices of $H$ and so on.
Despite the spectrum and Laplacian spectrum of the lexicographic product of two graphs (with some restrictions regarding the spectrum)
expressed in terms of the two factors be well known (see \cite{CAER2013}, where a unified approach is given), it is not the case
of the spectra and Laplacian spectra of graphs obtained by iterated lexicographic products, herein called lexicographic powers,
of regular and arbitrary graphs, respectively. A lexicographic power $H^k$ of a graph $H$ can produce a graph with a huge number of vertices
whose spectra and  Laplacian spectra may not be determined using their adjacency and Laplacian matrices, respectively.  The expressions herein
deduced for the spectra and Laplacian spectra of lexicographic powers can be easily programmed, for example, in Mathematica, and the results can
be obtained immediately. For instance, the spectrum of the $100$-th lexicographic power of the Petersen graph, presented in Section~\ref{main},
was obtained by Mathematica and the computations lasted only a few seconds. Notice that such lexicographic power has the googol number (that is,
$10^{100}$) of vertices.\\

The paper is organized as follows. In the next section, the notation is introduced and some preliminary results are given. The main results are
introduced in Section~\ref{main}, where the spectra (Laplacian spectra) of $H^k[G]$ and $H^k$, when $G$ and $H$ are regular (arbitary) graphs,
are deduced. Particular attention is given to the Laplacian index and algebraic connectivity of the lexicographic powers of arbitrary graphs. In
Section~\ref{extensions}, the obtained results are applied to extend some well known properties and spectral relations of combinatorial invariants
of graphs $H$ to its lexicographic powers $H^k$.

\section{Preliminaries}\label{preliminaries}
In this work we deal with simple and undirected graphs. If $G$ is such a graph of order $n$, its vertex set is denoted by $V(G)$ and its edge set by $E(G)$. The elements of $E(G)$ are denoted by $ij$,
where $i$ and $j$ are the extreme vertices of the edge $ij$. The degree of $j \in V\left( G\right)$ is denoted by $d_G\left( j\right)$, the maximum and minimum degree of the vertices in $G$ are
$\delta(G)$ and $\Delta(G)$ and the set of the neighbors of a vertex $j$ is $N_{G}(j)$.
The \emph{adjacency matrix} of $G$ is the $n \times n$ matrix $A_G$ whose $(i,j)$-entry is equal to $1$ whether $ij \in E(G)$
and is equal to $0$ otherwise. The \emph{Laplacian  matrix} of $G$ is the matrix $L_G=D-A_G$, where $D$ is the diagonal matrix whose diagonal elements
are the degrees of the vertices of $G$.
Since  $A_G$ and $L_G$ are symmetric matrices, theirs eigenvalues are real numbers.
From Ger\v{s}gorin's theorem,  the eigenvalues of $L_G $
are nonnegative.
 The multiset (that is, the set with possible repetitions) of eigenvalues of a matrix $M$ is called the
\emph{spectrum } of $M$ and denoted $\sigma(M)$.
 Throughout the paper, we write  $\sigma _A(G) = \{\lambda_1^{[g_1]}, \ldots, \lambda_s^{[g_s]}\}$ (respectively, $\sigma _{L}(G)  = \{\mu_1^{[l_1]}, \ldots, \mu_t^{[l_t]}\}$) when $\lambda_1> \ldots >\lambda_s$  ( $\mu_1 > \ldots > \mu_t$ ) are the distinct eigenvalues of $A_G$ ($L_G$) indexed in decreasing order - in this case,
 $\gamma^{[r]}$  means that the eigenvalue  $\gamma$  has multiplicity $r$. If  convenient, we write $\gamma(G)$
 in place of $\gamma$  to indicate  an eigenvalue of a matrix associated to $G$, and
  we denote the eigenvalues of $A_G $ (respectively, $L_G $) indexed  in non increasing order, as  $\lambda_1(G) \geq \cdots \geq \lambda_n(G)$
($\mu_1(G) \geq \cdots \geq \mu_n(G)$).

As usually, the adjacency matrix eigenvalues of a graph $G$ are called the \emph{eigenvalues of} $G$.
We remember  that $\mu_n(G)=0$ (the all one vector is the associated eigenvector) and its multiplicity is equal to the
number of components of $G$. Besides,  $\mu_{n-1}(G)$ is called the \emph{algebraic connectivity} of $G$ \cite{Fiedler73}.
Further concepts not defined in this paper can be found in \cite{Cve1, Cve2}.

The \textit{lexicographic product} (also called the \textit{composition}) of the graphs $H$ and $G$ is the graph
$H[G]$ (also denoted by $H \circ G$) for which the  vertex set is the cartesian product $V(H) \times V(G)$ and such that  a
vertex $(x_1,y_1)$ is adjacent to the vertex $(x_2,y_2)$ whenever $x_1$ is adjacent to $x_2$ or $x_1=x_2$ and $y_1$
is adjacent to $y_2$ (see \cite{harary_94} and \cite{HIK} for notations and further details). This graph operation was introduced by Harary in \cite{harary_59} and
Sabidussi in \cite{sabidussi_59}. It is immediate that the lexicographic product is associative but it is not
commutative.

The lexicographic product was generalized in \cite{Schwenk74} as follows: consider a graph $H$ of order $n$
and graphs  $G_i$, $i=1,\ldots,n$, with vertex sets  $V(G_i)$s   two by two disjoints.
The \textit{generalized composition} $H[G_1,  \ldots, G_n]$ is the graph such that
\begin{eqnarray*}
V(H[G_1,  \ldots, G_n]) &=& \bigcup_{i=1}^{n}{V(G_i)}  \   \  \   \mbox{ and } \\
E(H[G_1,  \ldots, G_n]) &=& \bigcup_{i=1}^{n}{E(G_i)}\ \cup \bigcup_{ij  \in E(H)}{E(G_i \vee G_j)},
\end{eqnarray*}
where $G_i \vee G_j$ denotes the join of the graphs $G_i$ and $G_j$. This operation is called in \cite{CAER2013}
the $H$\emph{-join} of graphs $G_1,  \ldots, G_n$.
In \cite{Schwenk74} and \cite{CAER2013}, the spectrum of $H[G_1,  \ldots, G_n]$ is provided, where $H$
is an arbitrary graph and  $G_1,  \dots, G_n$ are regular graphs. Furthermore, in \cite{G2010} and
\cite{CAER2013}, using different approaches, it was characterized the spectrum of the Laplacian matrix of $H[G_1,  \dots, G_n]$
for arbitrary graphs.

Now, let us focus on the spectrum of the adjacency and Laplacian matrix of the above generalized graph composition.

\begin{theorem}\cite{CAER2013}\label{CAER_th_1}
Let $H$ be a graph such that $V(H)=\{1, \dots, n\}$ and, for  $j = 1, \ldots, n$, let $G_j$ be a $p_j$-regular
graph with order $m_j$. Then
\begin{equation}
\sigma_A(H[G_1, \dots, G_n]) = \left(\bigcup_{j=1}^{n}{\left(\sigma_A(G_j) \setminus \{p_j\}\right)}\right)
                                    \cup \sigma({C}), \label{marca_dmc1}
\end{equation}
where
\begin{equation}
{C} = \begin{pmatrix}
                       p_1       & c_{12}    & \ldots & c_{1(n-1)}& c_{1n} \\
                   c_{21}     & p_2          & \ldots & c_{2(n-1)}& c_{2n} \\
                      \vdots     & \vdots       & \ddots & \vdots       & \vdots \\
                   c_{(n-1)1} & c_{(n-1)2}& \ldots & p_{n-1}      & c_{(n-1)n} \\
                   c_{n1}     & c_{n2}    & \ldots & c_{n(n-1)}& p_n
\end{pmatrix} \label{marca_dmc2}
\end{equation}
and
\begin{equation}
c_{ij} = \left\{\begin{array}{lll}
                   \sqrt{m_i m_j}  &  & \hbox{if } ij \in E(H), \\
                   0               &  & \hbox{otherwise.}%
                  \end{array}\right.  \label{marca_dmc3}
\end{equation}
\end{theorem}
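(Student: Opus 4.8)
The plan is to construct, by hand, a complete orthogonal basis of $\mathbb{R}^{m_1+\cdots+m_n}$ made of eigenvectors of $A:=A_{H[G_1,\dots,G_n]}$, split into two families that reproduce the two pieces on the right of \eqref{marca_dmc1}. First I would record the block structure of $A$ with respect to the partition of the vertex set into the $n$ copies $V(G_1),\dots,V(G_n)$: the $i$-th diagonal block is $A_{G_i}$, and for $i\ne j$ the block is the all-ones matrix $J_{m_i\times m_j}$ when $ij\in E(H)$ and the zero block otherwise. Since $G_j$ is $p_j$-regular, $A_{G_j}\mathbf{1}_{m_j}=p_j\mathbf{1}_{m_j}$ and the subspace $\mathbf{1}_{m_j}^{\perp}\subset\mathbb{R}^{m_j}$ is $A_{G_j}$-invariant, with $A_{G_j}$ acting on it with spectrum $\sigma_A(G_j)\setminus\{p_j\}$ (exactly one copy of $p_j$ deleted).

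For the first family, I would fix $j$ and an eigenvector $v\in\mathbf{1}_{m_j}^{\perp}$, say $A_{G_j}v=\lambda v$, and form $\widehat v$ equal to $v$ on $V(G_j)$ and $0$ elsewhere. Because $J_{m_i\times m_j}v=(\mathbf{1}_{m_j}^{\top}v)\mathbf{1}_{m_i}=0$, every off-diagonal block annihilates $v$, so $A\widehat v=\lambda\widehat v$; letting $j$ and $v$ range over an orthogonal eigenbasis of each $\mathbf{1}_{m_j}^{\perp}$ yields $\sum_{j=1}^{n}(m_j-1)$ mutually orthogonal eigenvectors realizing $\bigcup_{j=1}^{n}\bigl(\sigma_A(G_j)\setminus\{p_j\}\bigr)$.

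For the second family, I would look at the $n$-dimensional space $W$ of vectors constant on each copy, $x=(\alpha_1\mathbf{1}_{m_1},\dots,\alpha_n\mathbf{1}_{m_n})$. A one-line block computation shows $Ax$ is again in $W$, with $i$-th coordinate-block $\bigl(p_i\alpha_i+\sum_{j:\,ij\in E(H)}m_j\alpha_j\bigr)\mathbf{1}_{m_i}$; thus $A|_W$ is represented, in the basis $(\mathbf{1}_{m_i})$, by the matrix $B$ with $B_{ii}=p_i$ and $B_{ij}=m_j$ for $ij\in E(H)$ (and $0$ otherwise). Writing $D=\mathrm{diag}(m_1,\dots,m_n)$, a direct check gives $C=D^{1/2}BD^{-1/2}$, so $\sigma(B)=\sigma(C)$; moreover $\langle(\alpha_i\mathbf{1}_{m_i})_i,(\gamma_i\mathbf{1}_{m_i})_i\rangle=\alpha^{\top}D\gamma$, so after the substitution $\alpha=D^{-1/2}\beta$ an orthonormal eigenbasis of the symmetric matrix $C$ lifts to an orthogonal eigenbasis of $A|_W$, realizing $\sigma(C)$.

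Finally I would check completeness: vectors of the first family live on a single copy and sum to zero there, hence are orthogonal to $W$ and to each other, while $W$ carries an orthogonal eigenbasis; together this is an orthogonal set of $\sum_{j=1}^{n}(m_j-1)+n=\sum_{j=1}^{n}m_j=|V(H[G_1,\dots,G_n])|$ eigenvectors, hence a basis, and reading \eqref{marca_dmc1} as an identity of multisets finishes the argument. The only genuinely delicate point is this last bookkeeping — making the count exact so that no eigenvalue is lost, together with the observation that the non-symmetric matrix $B$ must be symmetrized to $C$ via the weighted inner product (equivalently, the conjugation by $D^{1/2}$) in order both to identify its spectrum and to keep the lifted eigenbasis orthogonal; the two block computations themselves are routine.
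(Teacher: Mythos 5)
Theorem~\ref{CAER_th_1} is stated in the paper only as a citation of \cite{CAER2013}, with no proof given here, so there is nothing internal to compare against; judged on its own, your argument is correct and complete, and it follows essentially the standard route used in the cited source: extending eigenvectors orthogonal to $\mathbf{1}_{m_j}$ by zero to capture $\bigcup_j\bigl(\sigma_A(G_j)\setminus\{p_j\}\bigr)$, and identifying the action on block-constant vectors with the quotient matrix $B$, symmetrized to $C=D^{1/2}BD^{-1/2}$. Your dimension count $\sum_j(m_j-1)+n=\sum_j m_j$ and the orthogonality of the two families correctly settle the multiset identity \eqref{marca_dmc1}, so no step is missing.
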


Let $H$ be a graph of order $n$ and $G$  be an arbitrary graph. If, for $1 \le i \le n$,  $G_i$ is isomorphic to $G$,
it follows immediately that $H[G_1, \ldots, G_n]=H[G]$, a fact also noted in \cite{BaBaPa}. In particular case of a
regular graph $G$, Theorem \ref{CAER_th_1} implies the corollary below.

\begin{corollary}\label{corollary_1}
Let $H$ be  a graph of order $n$ with $\sigma _A(H) = \{\lambda^{[h_1]}_1(H), \dots, \lambda^{[h_t]}_t(H)\}$ and
let $G$ be a  $p$-regular graph of order $m$ such that $\sigma _A(G) = \{\lambda^{[g_1]}_1(G), \ldots, \lambda^{[g_s]}_s(G)\}$. Then
$$
\sigma_A(H[G])= \{p^{[n(g_1-1)]}, \ldots, \lambda^{[ng_s]}_s(G)\} \cup \{(m{\lambda_1(H)}+p)^{[h_1]}, \ldots, (m{\lambda_t(H)}+p)^{[h_t]}\}\,.
$$
\end{corollary}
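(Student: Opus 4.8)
The plan is to specialize Theorem~\ref{CAER_th_1} to the situation $G_1=\cdots=G_n=G$ and then read off the two blocks of the spectrum it produces. As noted immediately before the statement, taking $G_i\cong G$ for every $i$ gives $H[G_1,\dots,G_n]=H[G]$, so Theorem~\ref{CAER_th_1} applies with $p_j=p$ and $m_j=m$ for all $j$. Under this specialization the ``local'' part of \eqref{marca_dmc1}, namely $\bigcup_{j=1}^{n}\bigl(\sigma_A(G_j)\setminus\{p_j\}\bigr)$, is simply the multiset union of $n$ identical copies of $\sigma_A(G)\setminus\{p\}$.

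I would then identify this block explicitly. Since $G$ is $p$-regular, its largest eigenvalue is $\lambda_1(G)=p$ (with the all-ones vector as eigenvector), of some multiplicity $g_1\ge 1$. Removing a single copy of $p$ from the multiset $\sigma_A(G)$ thus leaves $g_1-1$ copies of $\lambda_1(G)=p$ together with $g_j$ copies of $\lambda_j(G)$ for $j=2,\dots,s$; forming the $n$-fold multiset union multiplies every multiplicity by $n$. This produces exactly $\{p^{[n(g_1-1)]},\lambda_2^{[ng_2]}(G),\dots,\lambda_s^{[ng_s]}(G)\}$, the first set in the claimed identity.

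For the remaining block, observe that in \eqref{marca_dmc2}--\eqref{marca_dmc3} every diagonal entry of $C$ equals $p$, while $c_{ij}=\sqrt{m_i m_j}=m$ when $ij\in E(H)$ and $c_{ij}=0$ otherwise; hence $C=p\,I_n+m\,A_H$ exactly. Therefore $\sigma(C)=\{\,p+m\mu:\mu\in\sigma_A(H)\,\}$ with multiplicities inherited from $A_H$, i.e.\ $\{(m\lambda_1(H)+p)^{[h_1]},\dots,(m\lambda_t(H)+p)^{[h_t]}\}$. Substituting the two computed blocks into \eqref{marca_dmc1} yields the corollary. The argument is essentially bookkeeping; the only step that needs care is the multiset arithmetic — recognising that $p=\lambda_1(G)$, so that deleting one copy of $p$ decreases the multiplicity of the top eigenvalue of $G$ by exactly one, and that the ``$\setminus$'' in Theorem~\ref{CAER_th_1} denotes removal of a single copy — after which the identification of $\sigma(pI_n+mA_H)$ is immediate.
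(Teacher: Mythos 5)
Your proposal is correct and follows exactly the route the paper intends: the paper states the corollary as an immediate specialization of Theorem~\ref{CAER_th_1} with $G_1=\cdots=G_n\cong G$, and your explicit bookkeeping (the $n$-fold copy of $\sigma_A(G)\setminus\{p\}$ and the identification $C=pI_n+mA_H$) is just the detailed version of that same argument. No gaps.
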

\vskip 0.3cm

Now it is worth to recall the following result.

\begin{theorem}\cite{CAER2013}\label{CAER_th_2}
Let $H$ be a graph such that $V(H)=\{1, \ldots, n\}$ and, for each $j \in \{1, \ldots, n\}$, let  $G_{j}$  be a graph of
order $m_{j}$ with Laplacian spectrum $\sigma_L(G_{j})$. Then the Laplacian spectrum of $H[G_1, \dots, G_n]$ is given by
\begin{equation*}
\sigma_L(H[G_1, \dots, G_n]) = \left( \bigcup_{j=1}^{n}{(s_j + (\sigma_L(G_{j}) \setminus \{0\}))}\right) \cup \sigma({C}),
\end{equation*}
where
\begin{equation*}
s_j = \left\{\begin{array}{ll}
      \sum_{i \in N_H(j)}{m_i}, & \hbox{if } N_H(j) \ne \emptyset, \\
                             0, & \hbox{otherwise}%
\end{array}\right.\,
\end{equation*}
and
$s_j + (\sigma_L(G_{j}) \setminus \{0\})$
means that the number $s_j$ is added to each element of $\sigma_L(G_{j}) \setminus \{0\}$, and
\begin{equation}
{C} = \begin{pmatrix}
                       s_1       &-c_{12}    & \ldots &-c_{1(n-1)}&-c_{1n} \\
                   -c_{21}    & s_2          & \ldots &-c_{2(n-1)}&-c_{2n} \\
                      \vdots     & \vdots       & \ddots & \vdots       & \vdots \\
                   -c_{(n-1)1}&-c_{(n-1)2}& \ldots & s_{n-1}      &-c_{(n-1)n} \\
                   -c_{n1}    &-c_{n2}    & \ldots &-c_{n(n-1)}& s_n
\end{pmatrix}\label{marca_dmc2}   \  \  \  \  \mbox{with}
\end{equation}
\begin{equation}
c_{ij} = \left\{\begin{array}{lll}
                   \sqrt{m_i m_j}  &  & \hbox{if } ij \in E(H), \\
                   0               &  & \hbox{otherwise}.
                  \end{array}\right.  \label{marca_dmc3}
\end{equation}
 \end{theorem}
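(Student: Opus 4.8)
The plan is to exhibit a complete orthogonal family of eigenvectors of $L:=L_{H[G_1,\dots,G_n]}$ by exploiting the block structure of this matrix. Order the vertices of $H[G_1,\dots,G_n]$ so that the vertices of each $G_i$ occupy a consecutive block, and write $I_m$ for the identity matrix of order $m$, $J_{p\times q}$ for the all-ones $p\times q$ matrix, and $\mathbf 1_m$ for the all-ones column vector of length $m$. Since a vertex $v\in V(G_i)$ has degree $d_{G_i}(v)+s_i$ in $H[G_1,\dots,G_n]$, the matrix $L$ has the $n\times n$ block form whose $(i,i)$ block equals $L_{G_i}+s_iI_{m_i}$ and whose $(i,j)$ block, for $i\ne j$, equals $-J_{m_i\times m_j}$ when $ij\in E(H)$ and the zero matrix otherwise.

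First I would decompose $\mathbb R^{m_1+\cdots+m_n}$ as the orthogonal direct sum $W\oplus W^{\perp}$, where $W=\bigoplus_{i=1}^n\operatorname{span}(\widehat{\mathbf 1}_i)$ with $\widehat{\mathbf 1}_i$ the vector equal to $1$ on $V(G_i)$ and $0$ elsewhere, and $W^{\perp}=\bigoplus_{i=1}^n\big(\mathbf 1_{m_i}^{\perp}\ \text{inside}\ \mathbb R^{V(G_i)}\big)$. Both summands are $L$-invariant: for $W$ this follows from $L_{G_i}\mathbf 1_{m_i}=\mathbf 0$ and $J_{m_i\times m_j}\mathbf 1_{m_j}=m_j\mathbf 1_{m_i}$; for $W^{\perp}$ it follows from the invariance of $\mathbf 1_{m_i}^{\perp}$ under the symmetric matrix $L_{G_i}$, together with the fact that $J_{m_i\times m_j}x_j=\mathbf 0$ whenever $\mathbf 1_{m_j}^{\top}x_j=0$.

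Next I would identify the two restrictions. On $W^{\perp}$ every off-diagonal block of $L$ annihilates the relevant component, so $L$ acts on the $i$-th summand simply as $L_{G_i}+s_iI_{m_i}$ restricted to $\mathbf 1_{m_i}^{\perp}$; because $L_{G_i}$ is symmetric with $L_{G_i}\mathbf 1_{m_i}=\mathbf 0$, the spectrum of this restriction is obtained from $\sigma_L(G_i)$ by deleting one copy of $0$ and adding $s_i$ to each surviving eigenvalue, which is exactly the contribution $s_j+(\sigma_L(G_j)\setminus\{0\})$. On $W$, passing to the orthonormal basis $\{\mathbf 1_{m_1}/\sqrt{m_1},\dots,\mathbf 1_{m_n}/\sqrt{m_n}\}$ and using $L\widehat{\mathbf 1}_i=s_i\widehat{\mathbf 1}_i-\sum_{j:\,ij\in E(H)}m_i\widehat{\mathbf 1}_j$, one finds that $L|_W$ is represented by the matrix with $(i,i)$ entry $s_i$ and $(i,j)$ entry $-\sqrt{m_im_j}$ exactly when $ij\in E(H)$ --- that is, precisely the matrix $C$ of the statement --- so $\sigma(L|_W)=\sigma(C)$.

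Finally, since $\dim W^{\perp}=\sum_{i=1}^n(m_i-1)$ and $\dim W=n$ sum to the order of $H[G_1,\dots,G_n]$, and $W$ and $W^{\perp}$ are orthogonal and $L$-invariant, the multiset union of $\sigma(L|_{W^{\perp}})$ and $\sigma(L|_W)$ is the full Laplacian spectrum, giving the stated formula. I expect the only delicate points to be the multiplicity bookkeeping in the $W^{\perp}$ part --- explaining, when a factor $G_j$ is disconnected, why precisely one copy of $0$ is removed from $\sigma_L(G_j)$ (namely, because exactly the direction $\mathbf 1_{m_j}$ is transferred to $W$) --- and the careful, though routine, verification that the passage to the orthonormal basis converts the non-symmetric quotient matrix of $L|_W$ into the symmetric matrix $C$.
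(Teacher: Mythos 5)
Your proof is correct; note, however, that this paper does not prove Theorem~\ref{CAER_th_2} at all --- it is quoted from \cite{CAER2013} --- so there is no internal proof to compare against. Your argument (splitting $\mathbb{R}^{m_1+\cdots+m_n}$ into the block-constant subspace $W$ and the block-mean-zero subspace $W^{\perp}$, both $L$-invariant, reading off $s_j+(\sigma_L(G_j)\setminus\{0\})$ on $W^{\perp}$ and the symmetric quotient matrix $C$ on $W$ via the orthonormal basis $\mathbf 1_{m_i}/\sqrt{m_i}$) is precisely the standard proof used in the cited sources, including the correct treatment of multiplicities when some $G_j$ is disconnected, so it stands as a valid self-contained justification of the theorem.
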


Assuming that $G_1,  \ldots, G_n$ are all isomorphic to a particular graph $G$ we have the next corollary, which was also proved in \cite{BaBaPa}.

\begin{corollary}\label{corollary_2}
Let $H$ be  a graph of order $n$ with $\sigma _L(H) = \{{\mu_1}(H), \ldots, {\mu_n}(H)\}$ and
let $G$ be a
graph of order $m$ such that $\sigma _L(G) = \{{\mu_1(G), \ldots,  {\mu_m}(G)}\}$. Then
$$
\sigma_L(H[G])=\left( \bigcup_{j=1}^{n}{\left\{md_H(j)+\mu_i(G) : 1 \leq i \leq m-1\right\}}\right)\cup \left\{m\mu_1(H), \ldots,  m\mu_n(H)\right\}.$$
\end{corollary}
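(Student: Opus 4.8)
The plan is to derive Corollary~\ref{corollary_2} as the specialization of Theorem~\ref{CAER_th_2} obtained by setting every $G_j$ equal to $G$. First I would record the consequences of this substitution on each ingredient of the theorem. Since all $m_j = m$, the quantity $s_j$ becomes $s_j = \sum_{i \in N_H(j)} m = m\,d_H(j)$ when $d_H(j) \ge 1$, and $s_j = 0 = m\,d_H(j)$ when $j$ is isolated in $H$; thus in all cases $s_j = m\,d_H(j)$. Consequently the first block of the spectrum, $\bigcup_{j=1}^n \bigl(s_j + (\sigma_L(G_j)\setminus\{0\})\bigr)$, becomes exactly $\bigcup_{j=1}^n \{ m\,d_H(j) + \mu_i(G) : 1 \le i \le m-1 \}$, because removing one copy of the eigenvalue $0$ from $\sigma_L(G)$ leaves the $m-1$ eigenvalues $\mu_1(G), \dots, \mu_{m-1}(G)$ (under any fixed indexing that lists $0$ exactly once; if $G$ is disconnected one simply removes one of the zero eigenvalues and the remaining list still has $m-1$ entries).

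The second step is to identify $\sigma(C)$. With $m_i = m$ for all $i$, we have $c_{ij} = m$ for $ij \in E(H)$ and $c_{ij}=0$ otherwise, so $c_{ij} = m\,(A_H)_{ij}$, while the diagonal entries are $s_j = m\,d_H(j)$. Hence the matrix $C$ in \eqref{marca_dmc2} equals $m\,D_H - m\,A_H = m\,L_H$, where $D_H$ is the diagonal degree matrix of $H$. Therefore $\sigma(C) = \sigma(m L_H) = \{ m\mu_1(H), \dots, m\mu_n(H) \}$, since scaling a symmetric matrix by $m$ scales every eigenvalue by $m$. Substituting these two computations into the statement of Theorem~\ref{CAER_th_2} yields precisely the displayed formula for $\sigma_L(H[G])$, using the observation (already made in the excerpt, citing \cite{BaBaPa}) that $H[G_1,\dots,G_n] = H[G]$ when every $G_i \cong G$.

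There is essentially no serious obstacle here: the argument is a routine bookkeeping specialization, and the only point requiring a word of care is the uniform treatment of isolated vertices of $H$, which is handled by noting that the piecewise definition of $s_j$ collapses to the single formula $s_j = m\,d_H(j)$ in both cases. If one wishes to be fully explicit, one may also remark that the multiset union is taken with multiplicities, so that the $n(m-1)$ eigenvalues coming from the first block and the $n$ eigenvalues coming from $\sigma(mL_H)$ together account for all $nm = |V(H[G])|$ Laplacian eigenvalues of $H[G]$, which serves as a consistency check on the counting.
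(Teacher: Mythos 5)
Your proposal is correct and follows exactly the route the paper intends: Corollary~\ref{corollary_2} is stated as the immediate specialization of Theorem~\ref{CAER_th_2} to $G_1 \cong \cdots \cong G_n \cong G$, and your computations ($s_j = m\,d_H(j)$, $C = mL_H$, hence $\sigma(C) = \{m\mu_1(H),\dots,m\mu_n(H)\}$) are precisely the bookkeeping that derivation requires. The remarks on isolated vertices of $H$ and on the multiplicity count $n(m-1)+n = nm$ are sound and only add useful explicitness.
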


\section{The spectra and Laplacian spectra of i\-te\-rated lexicographic products of graphs}\label{main}

Let us consider the graphs obtained by an arbitrary number of iterations of the lexicographic product  of a graph
by another as follows:
$$
H^0[G]=G,  \ H^1[G]=H[G] \  \mbox{ and }  \  H^k[G]=H[H^{k-1}[G]],  \mbox{ for all integer } \  k \geq 2.
$$

\begin{example}\label{ex_1}
Let us consider the graph $H=C_4$ (the cycle with four vertices) and $G=K_2$ (the complete graph
with two vertices). Then $H^{0}[G] = K_2$ and $H[G] = C_4[K_2]$ are depicted in Figure~\ref{graphG1}.
Furthermore, the Figure~\ref{graphG2}
depicts the graph $H^2[G] = {C_4}^2[K_2]= C_{4}[C_{4}[K_{2}]]$.
\end{example}

In what follows, we adopt  the traditional notation of the union of sets for denoting the union of multisets, where the repeated elements of the multisets $A$ and $B$ appear in
$A \cup B$ as many times as we count them in $A$ and $B$.

\begin{center}
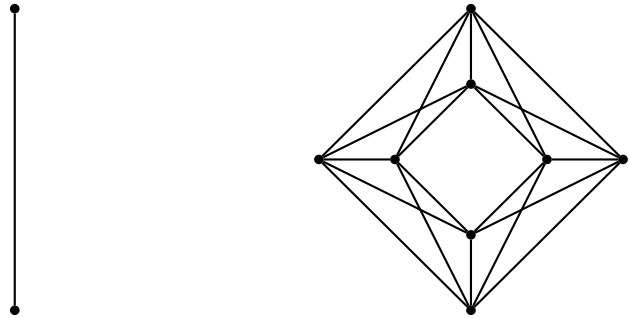
\begin{figure}[h]
\begin{pspicture}(10,4)(-2,0)
\dotnode(2,4){A}
\dotnode(2,0){B}
\ncline{A}{B}
\dotnode(7,2){A11}
\dotnode(6,2){B11}
\dotnode(8,1){A12}
\dotnode(8,0){B12}
\dotnode(9,2){A13}
\dotnode(10,2){B13}
\dotnode(8,3){A14}
\dotnode(8,4){B14}
\ncline{A11}{B11}
\ncline{A12}{B12}
\ncline{A13}{B13}
\ncline{A14}{B14}
\ncline{A11}{A12}
\ncline{A11}{A14}
\ncline{A13}{A12}
\ncline{A13}{A14}
\ncline{B11}{B12}
\ncline{B11}{B14}
\ncline{B13}{B12}
\ncline{B13}{B14}
\ncline{A11}{B14}
\ncline{A11}{B12}
\ncline{B11}{A14}
\ncline{B11}{A12}
\ncline{A13}{B14}
\ncline{A13}{B12}
\ncline{B13}{A14}
\ncline{B13}{A12}
\end{pspicture}
\caption{The graphs $H^0[G]=K_2$ and $H^1[G]=C_4[K_2]$.} \label{graphG1}
\end{figure}
 \end{center}

\vspace{2.5cm}

\begin{figure}[h]
\begin{pspicture}(15,8)(-2,0)
\dotnode(0.5,4){A11}
\dotnode(0,4){B11}
\dotnode(1,3.5){A12}
\dotnode(1,3){B12}
\dotnode(1.5,4){A13}
\dotnode(2,4){B13}
\dotnode(1,4.5){A14}
\dotnode(1,5){B14}
\ncline[linecolor=red]{A11}{B11}
\ncline[linecolor=red]{A12}{B12}
\ncline[linecolor=red]{A13}{B13}
\ncline[linecolor=red]{A14}{B14}
\ncline[linecolor=red]{A11}{A12}
\ncline[linecolor=red]{A11}{A14}
\ncline[linecolor=red]{A13}{A12}
\ncline[linecolor=red]{A13}{A14}
\ncline[linecolor=red]{B11}{B12}
\ncline[linecolor=red]{B11}{B14}
\ncline[linecolor=red]{B13}{B12}
\ncline[linecolor=red]{B13}{B14}
\ncline[linecolor=red]{A11}{B14}
\ncline[linecolor=red]{A11}{B12}
\ncline[linecolor=red]{B11}{A14}
\ncline[linecolor=red]{B11}{A12}
\ncline[linecolor=red]{A13}{B14}
\ncline[linecolor=red]{A13}{B12}
\ncline[linecolor=red]{B13}{A14}
\ncline[linecolor=red]{B13}{A12}
\dotnode(5,7){A41}
\dotnode(4.5,7){B41}
\dotnode(5.5,6.5){A42}
\dotnode(5.5,6){B42}
\dotnode(6,7){A43}
\dotnode(6.5,7){B43}
\dotnode(5.5,7.5){A44}
\dotnode(5.5,8){B44}
\ncline[linecolor=red]{A41}{B41}
\ncline[linecolor=red]{A42}{B42}
\ncline[linecolor=red]{A43}{B43}
\ncline[linecolor=red]{A44}{B44}
\ncline[linecolor=red]{A41}{A42}
\ncline[linecolor=red]{A41}{A44}
\ncline[linecolor=red]{A43}{A42}
\ncline[linecolor=red]{A43}{A44}
\ncline[linecolor=red]{B41}{B42}
\ncline[linecolor=red]{B41}{B44}
\ncline[linecolor=red]{B43}{B42}
\ncline[linecolor=red]{B43}{B44}
\ncline[linecolor=red]{A41}{B44}
\ncline[linecolor=red]{A41}{B42}
\ncline[linecolor=red]{B41}{A44}
\ncline[linecolor=red]{B41}{A42}
\ncline[linecolor=red]{A43}{B44}
\ncline[linecolor=red]{A43}{B42}
\ncline[linecolor=red]{B43}{A44}
\ncline[linecolor=red]{B43}{A42}

\dotnode(5,1){A21}
\dotnode(4.5,1){B21}
\dotnode(5.5,0.5){A22}
\dotnode(5.5,0){B22}
\dotnode(6,1){A23}
\dotnode(6.5,1){B23}
\dotnode(5.5,1.5){A24}
\dotnode(5.5,2){B24}
\ncline[linecolor=red]{A21}{B21}
\ncline[linecolor=red]{A22}{B22}
\ncline[linecolor=red]{A23}{B23}
\ncline[linecolor=red]{A24}{B24}
\ncline[linecolor=red]{A21}{A22}
\ncline[linecolor=red]{A21}{A24}
\ncline[linecolor=red]{A23}{A22}
\ncline[linecolor=red]{A23}{A24}
\ncline[linecolor=red]{B21}{B22}
\ncline[linecolor=red]{B21}{B24}
\ncline[linecolor=red]{B23}{B22}
\ncline[linecolor=red]{B23}{B24}
\ncline[linecolor=red]{A21}{B24}
\ncline[linecolor=red]{A21}{B22}
\ncline[linecolor=red]{B21}{A24}
\ncline[linecolor=red]{B21}{A22}
\ncline[linecolor=red]{A23}{B24}
\ncline[linecolor=red]{A23}{B22}
\ncline[linecolor=red]{B23}{A24}
\ncline[linecolor=red]{B23}{A22}

\dotnode(9.5,4){A31}
\dotnode(9,4){B31}
\dotnode(10,3.5){A32}
\dotnode(10,3){B32}
\dotnode(10.5,4){A33}
\dotnode(11,4){B33}
\dotnode(10,4.5){A34}
\dotnode(10,5){B34}
\ncline[linecolor=red]{A31}{B31}
\ncline[linecolor=red]{A32}{B32}
\ncline[linecolor=red]{A33}{B33}
\ncline[linecolor=red]{A34}{B34}
\ncline[linecolor=red]{A31}{A32}
\ncline[linecolor=red]{A31}{A34}
\ncline[linecolor=red]{A33}{A32}
\ncline[linecolor=red]{A33}{A34}
\ncline[linecolor=red]{B31}{B32}
\ncline[linecolor=red]{B31}{B34}
\ncline[linecolor=red]{B33}{B32}
\ncline[linecolor=red]{B33}{B34}
\ncline[linecolor=red]{A31}{B34}
\ncline[linecolor=red]{A31}{B32}
\ncline[linecolor=red]{B31}{A34}
\ncline[linecolor=red]{B31}{A32}
\ncline[linecolor=red]{A33}{B34}
\ncline[linecolor=red]{A33}{B32}
\ncline[linecolor=red]{B33}{A34}
\ncline[linecolor=red]{B33}{A32}
\ncline{A11}{A21}
\ncline{A11}{A22}
\ncline{A11}{A23}
\ncline{A11}{A24}
\ncline{A11}{B21}
\ncline{A11}{B22}
\ncline{A11}{B23}
\ncline{A11}{B24}
\ncline{A12}{A21}
\ncline{A12}{A22}
\ncline{A12}{A23}
\ncline{A12}{A24}
\ncline{A12}{B21}
\ncline{A12}{B22}
\ncline{A12}{B23}
\ncline{A12}{B24}
\ncline{A13}{A21}
\ncline{A13}{A22}
\ncline{A13}{A23}
\ncline{A13}{A24}
\ncline{A13}{B21}
\ncline{A13}{B22}
\ncline{A13}{B23}
\ncline{A13}{B24}
\ncline{A14}{A21}
\ncline{A14}{A22}
\ncline{A14}{A23}
\ncline{A14}{A24}
\ncline{A14}{B21}
\ncline{A14}{B22}
\ncline{A14}{B23}
\ncline{A14}{B24}%
\ncline{B11}{A21}
\ncline{B11}{A22}
\ncline{B11}{A23}
\ncline{B11}{A24}
\ncline{B11}{B21}
\ncline{B11}{B22}
\ncline{B11}{B23}
\ncline{B11}{B24}
\ncline{B12}{A21}
\ncline{B12}{A22}
\ncline{B12}{A23}
\ncline{B12}{A24}
\ncline{B12}{B21}
\ncline{B12}{B22}
\ncline{B12}{B23}
\ncline{B12}{B24}
\ncline{B13}{A21}
\ncline{B13}{A22}
\ncline{B13}{A23}
\ncline{B13}{A24}
\ncline{B13}{B21}
\ncline{B13}{B22}
\ncline{B13}{B23}
\ncline{B13}{B24}
\ncline{B14}{A21}
\ncline{B14}{A22}
\ncline{B14}{A23}
\ncline{B14}{A24}
\ncline{B14}{B21}
\ncline{B14}{B22}
\ncline{B14}{B23}
\ncline{B14}{B24}
\ncline{A11}{A41}
\ncline{A11}{A42}
\ncline{A11}{A43}
\ncline{A11}{A44}
\ncline{A11}{B41}
\ncline{A11}{B42}
\ncline{A11}{B43}
\ncline{A11}{B44}
\ncline{A12}{A41}
\ncline{A12}{A42}
\ncline{A12}{A43}
\ncline{A12}{A44}
\ncline{A12}{B41}
\ncline{A12}{B42}
\ncline{A12}{B43}
\ncline{A12}{B44}
\ncline{A13}{A41}
\ncline{A13}{A42}
\ncline{A13}{A43}
\ncline{A13}{A44}
\ncline{A13}{B41}
\ncline{A13}{B42}
\ncline{A13}{B43}
\ncline{A13}{B44}
\ncline{A14}{A41}
\ncline{A14}{A42}
\ncline{A14}{A43}
\ncline{A14}{A44}
\ncline{A14}{B41}
\ncline{A14}{B42}
\ncline{A14}{B43}
\ncline{A14}{B44}%
\ncline{B11}{A41}
\ncline{B11}{A42}
\ncline{B11}{A43}
\ncline{B11}{A44}
\ncline{B11}{B41}
\ncline{B11}{B42}
\ncline{B11}{B43}
\ncline{B11}{B44}
\ncline{B12}{A41}
\ncline{B12}{A42}
\ncline{B12}{A43}
\ncline{B12}{A44}
\ncline{B12}{B41}
\ncline{B12}{B42}
\ncline{B12}{B43}
\ncline{B12}{B44}
\ncline{B13}{A41}
\ncline{B13}{A42}
\ncline{B13}{A43}
\ncline{B13}{A44}
\ncline{B13}{B41}
\ncline{B13}{B42}
\ncline{B13}{B43}
\ncline{B13}{B44}
\ncline{B14}{A41}
\ncline{B14}{A42}
\ncline{B14}{A43}
\ncline{B14}{A44}
\ncline{B14}{B41}
\ncline{B14}{B42}
\ncline{B14}{B43}
\ncline{B14}{B44}
\ncline{A31}{A21}
\ncline{A31}{A22}
\ncline{A31}{A23}
\ncline{A31}{A24}
\ncline{A31}{B21}
\ncline{A31}{B22}
\ncline{A31}{B23}
\ncline{A31}{B24}
\ncline{A32}{A21}
\ncline{A32}{A22}
\ncline{A32}{A23}
\ncline{A32}{A24}
\ncline{A32}{B21}
\ncline{A32}{B22}
\ncline{A32}{B23}
\ncline{A32}{B24}
\ncline{A33}{A21}
\ncline{A33}{A22}
\ncline{A33}{A23}
\ncline{A33}{A24}
\ncline{A33}{B21}
\ncline{A33}{B22}
\ncline{A33}{B23}
\ncline{A33}{B24}
\ncline{A34}{A21}
\ncline{A34}{A22}
\ncline{A34}{A23}
\ncline{A34}{A24}
\ncline{A34}{B21}
\ncline{A34}{B22}
\ncline{A34}{B23}
\ncline{A34}{B24}%
\ncline{B31}{A21}
\ncline{B31}{A22}
\ncline{B31}{A23}
\ncline{B31}{A24}
\ncline{B31}{B21}
\ncline{B31}{B22}
\ncline{B31}{B23}
\ncline{B31}{B24}
\ncline{B32}{A21}
\ncline{B32}{A22}
\ncline{B32}{A23}
\ncline{B32}{A24}
\ncline{B32}{B21}
\ncline{B32}{B22}
\ncline{B32}{B23}
\ncline{B32}{B24}
\ncline{B33}{A21}
\ncline{B33}{A22}
\ncline{B33}{A23}
\ncline{B33}{A24}
\ncline{B33}{B21}
\ncline{B33}{B22}
\ncline{B33}{B23}
\ncline{B33}{B24}
\ncline{B34}{A21}
\ncline{B34}{A22}
\ncline{B34}{A23}
\ncline{B34}{A24}
\ncline{B34}{B21}
\ncline{B34}{B22}
\ncline{B34}{B23}
\ncline{B34}{B24}
\ncline{A31}{A41}
\ncline{A31}{A42}
\ncline{A31}{A43}
\ncline{A31}{A44}
\ncline{A31}{B41}
\ncline{A31}{B42}
\ncline{A31}{B43}
\ncline{A31}{B44}
\ncline{A32}{A41}
\ncline{A32}{A42}
\ncline{A32}{A43}
\ncline{A32}{A44}
\ncline{A32}{B41}
\ncline{A32}{B42}
\ncline{A32}{B43}
\ncline{A32}{B44}
\ncline{A33}{A41}
\ncline{A33}{A42}
\ncline{A33}{A43}
\ncline{A33}{A44}
\ncline{A33}{B41}
\ncline{A33}{B42}
\ncline{A33}{B43}
\ncline{A33}{B44}
\ncline{A34}{A41}
\ncline{A34}{A42}
\ncline{A34}{A43}
\ncline{A34}{A44}
\ncline{A34}{B41}
\ncline{A34}{B42}
\ncline{A34}{B43}
\ncline{A34}{B44}%
\ncline{B31}{A41}
\ncline{B31}{A42}
\ncline{B31}{A43}
\ncline{B31}{A44}
\ncline{B31}{B41}
\ncline{B31}{B42}
\ncline{B31}{B43}
\ncline{B31}{B44}
\ncline{B32}{A41}
\ncline{B32}{A42}
\ncline{B32}{A43}
\ncline{B32}{A44}
\ncline{B32}{B41}
\ncline{B32}{B42}
\ncline{B32}{B43}
\ncline{B32}{B44}
\ncline{B33}{A41}
\ncline{B33}{A42}
\ncline{B33}{A43}
\ncline{B33}{A44}
\ncline{B33}{B41}
\ncline{B33}{B42}
\ncline{B33}{B43}
\ncline{B33}{B44}
\ncline{B34}{A41}
\ncline{B34}{A42}
\ncline{B34}{A43}
\ncline{B34}{A44}
\ncline{B34}{B41}
\ncline{B34}{B42}
\ncline{B34}{B43}
\ncline{B34}{B44}

\end{pspicture}
\caption{The graph $H^2[G] = C_4^2[K_2]$.} \label{graphG2}
\end{figure}
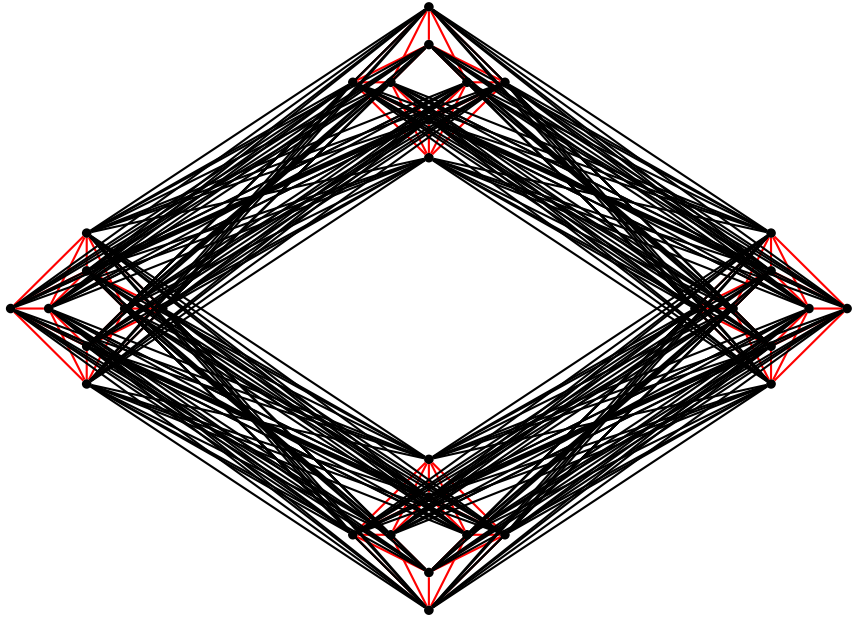

\subsection{The spectrum in the case of a $p$-regular graph $G$ and a $q$-regular graph $H$}

The next theorem states the regularity degree, order and spectrum of $H^k[G]$, for $k \ge 0$,
when $G$ and $H$ are both regular connected graphs.

\begin{theorem}\label{main_th_1}
Let  $H$ be a $q$-regular connected graph of order $n$ with 
$\sigma_A(H)=\{q,\gamma^{[h_2]}_2(H),\dots,\gamma^{[h_t]}_t(H)\}$
and $G$ be a $p$-regular connected graph of order $m$ with 
$\sigma_A(G)=\{p, \gamma^{[g_2]}_2(G), \dots, \gamma^{[g_s]}_s(G)\}$.
Then for each integer $k \ge 0$,  $H^k[G]$ is a $r_k$-regular graph of order $\nu_k$ with
\begin{eqnarray*}
 r_k                         &=& m q \frac{n^k-1}{n-1} + p,\\
\nu_k                       &=& m n^k,\\
\sigma_A(H^k[G])      &=& \left\{\gamma^{[n^kg_2]}_2(G),\dots, \gamma^{[n^kg_s]}_s(G)\right\} \cup
\{r_k\} \cup \Lambda_k \;\; \mbox{  where } \\
\Lambda_k  &=& \bigcup_{i=0}^{k-1}{\left\{(m n^{i}\gamma_2(H)+r_i)^{[n^{k-1-i}h_2]},\dots, (m n^i\gamma_t(H)+r_i)^{[n^{k-1-i}h_t]}\right\}},
\end{eqnarray*}
assuming that $\Lambda_0= \emptyset$.
\end{theorem}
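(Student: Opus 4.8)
The plan is to prove the three statements (regularity, order, spectrum) simultaneously by induction on $k$, using Corollary~\ref{corollary_1} as the only engine. For the base case $k=0$ one has $H^0[G]=G$, which is $p$-regular of order $m$; here $r_0=mq\frac{n^0-1}{n-1}+p=p$ and $\nu_0=m$, while $\sigma_A(H^0[G])=\sigma_A(G)=\{\gamma_2^{[g_2]}(G),\dots,\gamma_s^{[g_s]}(G)\}\cup\{p\}$, which is exactly the claimed spectrum with $\Lambda_0=\emptyset$. (The hypothesis that $G$ is connected is what guarantees here that $p=r_0$ is a simple eigenvalue, as required for it to appear singly in the claimed list.)

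For the inductive step, assume the statement holds for $k-1$, so that $G':=H^{k-1}[G]$ is $r_{k-1}$-regular of order $mn^{k-1}$ with the asserted spectrum. Since $H$ is connected, in the nontrivial case $n\ge 2$ it has no isolated vertex, hence $H^k[G]=H[G']$ is connected and $r_{k-1}$ is a simple eigenvalue of $G'$. The order and regularity then follow at once: a vertex $(x,y)$ of $H[G']$ has degree $d_{G'}(y)+d_H(x)\,|V(G')|=r_{k-1}+qmn^{k-1}$, so $H^k[G]$ is $r_k$-regular once one checks the purely arithmetic identity $r_{k-1}+qmn^{k-1}=mq\bigl(\tfrac{n^{k-1}-1}{n-1}+n^{k-1}\bigr)+p=mq\tfrac{n^k-1}{n-1}+p$, and $\nu_k=n\,\nu_{k-1}=mn^k$.

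For the spectrum, apply Corollary~\ref{corollary_1} to $H[G']$, with $H$ as the outer factor (eigenvalue $q$ simple, eigenvalues $\gamma_j(H)$ of multiplicity $h_j$) and $G'$ as the inner $r_{k-1}$-regular factor of order $mn^{k-1}$. The first term produced, $\bigcup_{j=1}^n\bigl(\sigma_A(G')\setminus\{r_{k-1}\}\bigr)$, consists of $n$ disjoint copies of $\{\gamma_2^{[n^{k-1}g_2]}(G),\dots,\gamma_s^{[n^{k-1}g_s]}(G)\}\cup\Lambda_{k-1}$ (the single copy of $r_{k-1}$ being deleted), i.e.\ of $\{\gamma_2^{[n^{k}g_2]}(G),\dots,\gamma_s^{[n^{k}g_s]}(G)\}$ together with the multiset $\widehat\Lambda_{k-1}$ obtained from $\Lambda_{k-1}$ by multiplying each multiplicity by $n$. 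The second term is $\{mn^{k-1}\lambda+r_{k-1}:\lambda\in\sigma_A(H)\}$, which splits as $r_k=mn^{k-1}q+r_{k-1}$ (simple) together with $(mn^{k-1}\gamma_j(H)+r_{k-1})^{[h_j]}$ for $2\le j\le t$.

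It remains to recognise the $H$-related pieces as $\Lambda_k$. By definition $\Lambda_k$ is the union over $i=0,\dots,k-1$ of the batches $\{(mn^{i}\gamma_j(H)+r_i)^{[n^{k-1-i}h_j]}:2\le j\le t\}$; its batch $i=k-1$ is precisely $\{(mn^{k-1}\gamma_j(H)+r_{k-1})^{[h_j]}:2\le j\le t\}$ (as $n^{k-1-i}=1$ there), and its batches $i=0,\dots,k-2$ are, term by term, those of $\Lambda_{k-1}$ with every multiplicity $n^{k-2-i}h_j$ replaced by $n^{k-1-i}h_j$, i.e.\ $\widehat\Lambda_{k-1}$. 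Hence the union of the two $H$-contributions is exactly $\{r_k\}\cup\Lambda_k$, so altogether $\sigma_A(H^k[G])=\{\gamma_2^{[n^kg_2]}(G),\dots,\gamma_s^{[n^kg_s]}(G)\}\cup\{r_k\}\cup\Lambda_k$, completing the induction. The only genuine work is the bookkeeping in this last step — tracking that the level-$(k-1)$ batch enters with unscaled multiplicities $h_j$ while all earlier batches get their multiplicities uniformly multiplied by $n$ — together with making sure that connectivity of $G$ (base case) and of $H$ with $n\ge2$ (inductive step) is invoked exactly where the simplicity of the regularity eigenvalue is needed.
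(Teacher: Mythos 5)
Your proposal is correct and follows essentially the same route as the paper: induction on $k$ with Corollary~\ref{corollary_1} applied to $H[H^{k-1}[G]]$, together with the recursions $r_k=\nu_{k-1}q+r_{k-1}$ and $\nu_k=n\nu_{k-1}$. The only difference is presentational — you spell out the multiplicity bookkeeping ($n$ copies of $\sigma_A(H^{k-1}[G])\setminus\{r_{k-1}\}$ rescaling $\Lambda_{k-1}$ into the first $k-1$ batches of $\Lambda_k$, and the new batch $(mn^{k-1}\gamma_j(H)+r_{k-1})^{[h_j]}$) and the role of connectivity in making the regularity eigenvalue simple, which the paper leaves implicit.
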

\begin{proof}
Since  $H^0[G]=G$, the case $k=0$ follows. Furthermore, the case $k=1$ follows from Corollary~\ref{corollary_1}, since
$H^1[G]=H[G]$ (notice that $r_1 = mq+p =m\gamma_1(H)+p$).
Let us assume that the result holds for  $k-1$ iterations, with $k \geq 2$. By definition of lexicographic product, we obtain
\begin{eqnarray*}
r_k &=& \nu_{k-1} q + r_{k-1}\\
    &=& m q(n^{k-1}+n^{k-2}+\dots+n+1)+p = mq\frac{n^k-1}{n-1}+p
\end{eqnarray*}
and $\nu_k=\nu(H^k[G])=\nu_{k-1} n = m n^k.$ Additionally, replacing in the Corollary~\ref{corollary_1}
the graph $G$ by $H^{k-1}[G]$ 
 it follows that
\begin{eqnarray*}
\sigma_A(H^k[G]) &=& \{\gamma^{[n^kg_2]}_2(G),\dots, \gamma^{[n^kg_s]}_s(G)\} \cup \{r_k\} \cup \Lambda_k,\\
\text{where }\;\;\Lambda_k  &=& \bigcup_{i=0}^{k-1}{\left\{(mn^{i}\gamma_2(H)+ r_i)^{[n^{k-1-i}h_2]}, \dots, (mn^i\gamma_t(H)+ r_i)^{[n^{k-1-i}h_t]}\right\}}.
\end{eqnarray*}
\end{proof}

\begin{example}\label{ex_2}
 For the graphs of Figure~\ref{graphG1}, we have $m=2$, $n=4$, $p=1$ and $q=2$.
From  Theorem~\ref{main_th_1}, we obtain the following degree,
order and spectra for $C_4^k[K_2]$ for a given $k \geq 1$ integer:
\begin{eqnarray*}
r_k                    &=& 2 \times 2 \frac{4^k-1}{4-1} + 1 = 
  \frac{4^{k+1}-1}{3},\\
\nu_k=\nu(H^k[G]) &=& 2 \times 4^k,\\
\sigma_A(H^k[G])  &=& \left\{(-1)^{[4^k]}\right\} \cup \left\{\frac{4^{k+1}-1}{3}\right\} \cup \Lambda_k,
\end{eqnarray*}
where $\Lambda_k = \bigcup_{i=0}^{k-1}{\left\{\left(2 \times 4^{i} \times 0+ \frac{4^{i+1}-1}{3}\right)^{[4^{k-1-i}\times 2]}, \left(2\times 4^i(-2)+ \frac{4^{i+1}-1}{3}\right)^{[4^{k-1-i}]}\right\}}$ $=$
      $\bigcup_{i=0}^{k-1}{\left\{(\frac{4^{i+1}-1}{3})^{[4^{k-1-i} \times 2]}, (-4^{i+1} + \frac{4^{i+1}-1}{3})^{[4^{k-1-i}]}\right\}}.$

In particular, for $k=2$ (the graph of Figure~\ref{graphG2}), it follows that
\begin{eqnarray*}
r_2                &=& 
21,\\
\nu_2=\nu(H^2[G])  &=& 32 \  \  \mbox{and}  \\
\sigma_A(H^2[G]) &=& \{21, (5)^{[2]}, (1)^{[8]}, (-1)^{[16]}, (-3)^{[4]},-11\}.
\end{eqnarray*}
\end{example}
\vskip 0.3cm

We may consider the graph obtained by an arbitrary number of iterations of the lexicographic product of a graph
for itself. In fact, for a given $q$-regular graph $H$ of order $n$, we assume that $H^0=K_1$, $H^1=H$ and that
$H^k=H^{k-1}[H] $ for $k\geq2$. Then, as immediate consequence of Theorem~\ref{main_th_1}, we have the following
corollary.

\begin{corollary}\label{cor_1_main_th_1}
Let $H$ be a connected $q$-regular graph of order $n$ such that $\sigma_A(H)=\{q,\gamma^{[h_2]}_2(H),\dots,\gamma^{[h_t]}_t(H)\}$.
Then, for each integer $k \geq 1$, $H^k$ is a $r_k$-regular graph of order $\nu_k$, such that
\begin{eqnarray*}
r_k           &=& q \frac{n^k-1}{n-1},\\
\nu_k         &=& n^k  \  \  \mbox{and}  \\
\sigma_A(H^k) &=& \left(\bigcup_{i=0}^{k-1}{\{(n^{i}\gamma_2(H)+r_i)^{[n^{k-1-i}h_2]},\dots,
                                         (n^i\gamma_t(H)+r_i)^{[n^{k-1-i}h_t]}\}}\right) \cup \{r_k\}.
\end{eqnarray*}
\end{corollary}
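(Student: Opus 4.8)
The plan is to read Corollary~\ref{cor_1_main_th_1} off Theorem~\ref{main_th_1} by specializing $G:=H$; since that theorem already carries out all the inductive work, what remains is purely a matter of reconciling the two index conventions. The first point to settle is that the power $H^{k}$ of the corollary (defined by $H^{0}=K_{1}$, $H^{1}=H$, $H^{k}=H^{k-1}[H]$) is the very same graph as the object $H^{k-1}[H]$ produced by the recursion $H^{j}[G]=H[H^{j-1}[G]]$ of Theorem~\ref{main_th_1} evaluated at $G=H$. This follows from associativity of the lexicographic product by a one-line induction: $H^{0}[H]=H=H^{1}$, and if $H^{j-1}[H]=H^{j}$ then $H^{j}[H]=H[H^{j-1}[H]]=H[H^{j}]=H^{j+1}$, the last equality holding because both sides are the $(j+1)$-fold lexicographic power of $H$.

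Next I would put $G=H$ in Theorem~\ref{main_th_1}, so that $m=n$, $p=q$, $s=t$, $g_{i}=h_{i}$ and $\gamma_{i}(G)=\gamma_{i}(H)$ for all $i$, and replace the iteration count $k$ of the theorem by $k-1$. The degree formula becomes $nq\,\frac{n^{k-1}-1}{n-1}+q=q\,\frac{n^{k}-1}{n-1}$, which is the claimed $r_{k}$, and the order becomes $n\cdot n^{k-1}=n^{k}$. For the spectrum, two elementary observations do the matching. First, the theorem's ``$r_{i}$'' computed with $G=H$ equals $q\,\frac{n^{i+1}-1}{n-1}$, which is the corollary's $r_{i+1}$; second, $r_{0}=q\,\frac{n^{0}-1}{n-1}=0$. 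Because $r_{0}=0$, the $i=0$ block $\{(\gamma_{2}(H)+r_{0})^{[n^{k-1}h_{2}]},\dots,(\gamma_{t}(H)+r_{0})^{[n^{k-1}h_{t}]}\}$ of the corollary's union is simply $\{\gamma_{2}(H)^{[n^{k-1}h_{2}]},\dots,\gamma_{t}(H)^{[n^{k-1}h_{t}]}\}$, i.e.\ the block $\{\gamma_{2}(G)^{[n^{k-1}g_{2}]},\dots,\gamma_{s}(G)^{[n^{k-1}g_{s}]}\}$ that appears in the theorem; and, under the change of index $i=j+1$, the remaining blocks $i=1,\dots,k-1$ of the corollary's union become $\bigcup_{j=0}^{k-2}\{(n^{j+1}\gamma_{2}(H)+r_{j})^{[n^{k-2-j}h_{2}]},\dots,(n^{j+1}\gamma_{t}(H)+r_{j})^{[n^{k-2-j}h_{t}]}\}$, which is precisely $\Lambda_{k-1}$ of the theorem after the substitution $G=H$. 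Adjoining the singleton $\{r_{k}\}=\{r_{k-1}^{\mathrm{thm}}\}$ then reproduces $\sigma_{A}(H^{k-1}[H])=\sigma_{A}(H^{k})$.

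I do not expect a genuine obstacle here — the result is flagged in the text as an ``immediate consequence'' — and the only thing that needs care is the double shift of indices (the iteration count decreases by one while the subscript of $r$ increases by one), together with the identity $r_{0}=0$ that lets the adjacency eigenvalues of the innermost copy of $H$ be absorbed into the generic term of the union. As a sanity check I would verify the boundary case $k=1$ directly: the union then degenerates to its $i=0$ block, and the stated formula returns $\{\gamma_{2}(H)^{[h_{2}]},\dots,\gamma_{t}(H)^{[h_{t}]}\}\cup\{q\}=\sigma_{A}(H)$, consistent with $H^{1}=H$. Alternatively, one could bypass Theorem~\ref{main_th_1} altogether and prove the corollary from scratch by induction on $k$, applying Corollary~\ref{corollary_1} to the splitting $H^{k}=H[H^{k-1}]$ exactly as in the proof of Theorem~\ref{main_th_1}; the reduction above simply avoids rerunning that argument.
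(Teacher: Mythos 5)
Your proposal is correct and follows essentially the paper's route: the paper states this corollary as an immediate consequence of Theorem~\ref{main_th_1}, and your specialization $G=H$ with the shift $k\mapsto k-1$ (justified by associativity), together with the observations $r_0=0$ and that the theorem's $r_i$ equals the corollary's $r_{i+1}$, is exactly the bookkeeping that makes it immediate. (Specializing instead to $G=K_1$, i.e.\ $m=1$, $p=0$, would yield the same formulas with no index shift, since $H^k[K_1]\cong H^k$, but this is only a cosmetic difference.)
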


\begin{remark}\label{least_eigenvalue_power_k}
The least eigenvalue of $H^k$ is $\lambda_{n^k}(H^k)=n^{k-1}\lambda_n(H) + q\frac{n^{k-1}-1}{n-1}$.
\end{remark}
\begin{proof}
In fact, based on the Corollary~\ref{cor_1_main_th_1}, we obtain
\begin{eqnarray*}
\lambda_{n^k}(H^k) & = & \min_{0 \le i \le k-1} \{n^i\gamma_t(H) + r_i\} =\min_{0 \le i \le k-1} \{n^i\gamma_t(H) + q\frac{n^i-1}{n-1}\} \nonumber \\
                   & = & n^{k-1}\gamma_t(H) + q\frac{n^{k-1}-1}{n-1}
                         =n^{k-1}\lambda_n(H) + q\frac{n^{k-1}-1}{n-1}.
\end{eqnarray*}
The third equality above is obtained taking into account that for every $i \in \{0, \dots, k-1\}$,
$n^{k-1}\gamma_t(H) + q\frac{n^{k-1}-1}{n-1} \le n^i\gamma_t(H) + q\frac{n^i-1}{n-1}$ $\Leftrightarrow$ $(n^{k-1}-n^i)\gamma_t(H) \le - q \frac{n^{k-1}-n^i}{n-1}$
$\Leftrightarrow$ $\gamma_t(H) \le - \frac{q}{n-1}$ and last inequality holds since the graph $H$ has at least one edge and then (see \cite{Cve1})
$\gamma_t(H) \le -1 \le - \frac{q}{n-1}$.
\end{proof}

\begin{remark}\label{the_invariant_spectrum}
Let $H$ be a $p$-regular graph of order $n$. Then for all $k \in \mathbb{N}$
and for all nonnegative integer $q$, $\sigma_A(H^k)\setminus \{r_k\} \subset \sigma_A(H^{k+q})$, where $r_k$
is the regularity of $H^k$ and this inclusion means that all eigenvalues with the respective multiplicities
of the multiset $\sigma_A(H^k)\setminus \{r_k\}$ belong to the multiset $\sigma_A(H^{k+q})$.
\end{remark}

\begin{proof}
This is a direct consequence of Corollary~\ref{cor_1_main_th_1}.
\end{proof}

\begin{example}
Let us apply the Corollary~\ref{cor_1_main_th_1} to the powers of the Pertersen graph $P^k$, with $k \in \{2, 3, 100\}$.

\begin{center}
\begin{tabular}{ |c|l| } \hline
$k$ & Spectrum of $P^k$ \\ \hline
$k=1$ & $3, \qquad  \qquad \qquad \;\;\; 1^{[5]}, \qquad \;  -2^{[4]}$ \\ \hline
$k=2$ & $33, \;\;\; \quad \qquad 13^{[5]}, \; 1^{[50]}, \;\;\;\;\;\;\: -2^{[40]},\;\;\; -17^{[4]}$ \\ \hline
$k=3$ & $333, \; 133^{[5]},\; 13^{[50]}, 1^{[500]}, \;\;\;\;\;\; -2^{[400]},\; -17^{[40]},\; -167^{[4]}$ \\ \hline
\multirow{5}{4em}{$k=100$} & $\displaystyle  3 \times \sum_{i=0}^{99} 10^{i}, \;\displaystyle \qquad \;\; 1^{[5 \times 10^{99}]}, \;\displaystyle -2^{[4 \times 10^{99}]}$, \\
                           & $\displaystyle \;\;\;\; \left(\;\;\;\;\;\; 10^m + 3 \sum_{i=0}^{m-1} 10^{i}\right)^{[5 \times 10^{99-m}]}, \; m=1, \ldots, 99$,  \\
                           & $\displaystyle - \left(7+ 10^m + 6 \sum_{i=1}^{m-1} 10^{i} \right)^{[4 \times 10^{99-m}]}, \; m=1, \ldots, 99$.   \\
\hline
\end{tabular}
\end{center}

Notice that the graph $P^k$ has $10^k$ vertices, in particular $P^{100}$ has the googol number of vertices $10^{100}$. All the computations were done by Mathematica and lasted
just a few seconds.
\end{example}

\subsection{The Laplacian spectra}\label{laplacian}

In this section we characterize the Laplacian spectrum of the iterated lexicographic product $H^k[G]$, where $G$ and $H$ are arbitrary graphs.
The particular cases of the Laplacian spectra of these iterated lexicographic products, when $H$ is regular and when $H$ is arbitrary but equal to $G$ are also
presented.

\begin{theorem}\label{main_th_2}
Let $G$ be a graph of order $m$ such that $\sigma_L(G)=\{\mu_1(G), \ldots,$ $ \mu_m(G)\}$ and let $H$ be a graph such that
$V(H)=[n]$ and $\sigma_L(H)=\{\mu_1(H),\ldots,$ $\mu_n(H)\}$. Then, for each integer $k \geq 1$,
$H^k[G]$ is a graph of order  $\nu_k = m n^k$ such that
$$
\sigma_L({H^k[G]}) = \Omega_G^k \cup \Gamma_H^k
$$
where
\begin{eqnarray*}
\Omega_G^k &=& \bigcup_{(j_1,j_2,\dots, j_k) \in [n]^k} \left\{ \mu_l(G) + m \sum_{i=1}^k n^{i-1} d_H(j_i) : 1 \leq l \leq m-1 \right\}   \mbox{           and}\\
\Gamma_H^k &=& \bigcup_{i=2}^{k} \left(\bigcup_{(j_{i},\ldots, j_k) \in [n]^{k-i+1}}\left\{mn^{i-2}\mu_l(H) +
m \sum_{r=i}^k n^{r-1} d_H(j_r) : 1 \leq l \leq n-1 \right\}\right) \cup \\
             & & \left\{  m n^{k-1} \mu_j(H) : 1 \leq j \leq n \right\}.
\end{eqnarray*}
\end{theorem}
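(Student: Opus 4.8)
The plan is to prove the identity by induction on $k$, using Corollary~\ref{corollary_2} (equivalently, Theorem~\ref{CAER_th_2} with all factors equal) as the only ingredient: once for the base case and once at each inductive step, after the substitution $H^k[G]=H[\,H^{k-1}[G]\,]$. For $k=1$ we have $H^1[G]=H[G]$, of order $\nu_1=mn$, and Corollary~\ref{corollary_2} reads
\[
\sigma_L(H[G])=\left(\bigcup_{j_1\in[n]}\{m\,d_H(j_1)+\mu_l(G):1\le l\le m-1\}\right)\cup\{m\mu_j(H):1\le j\le n\}.
\]
The first multiset is exactly $\Omega_G^1$, and the second is $\Gamma_H^1$ (the outer union $\bigcup_{i=2}^{1}$ in the definition of $\Gamma_H^k$ being empty), so the base case holds.

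Now fix $k\ge 2$ and assume the statement for $k-1$. Put $G':=H^{k-1}[G]$, so that $H^k[G]=H[G']$; by the induction hypothesis $G'$ has order $m':=\nu_{k-1}=mn^{k-1}$ and $\sigma_L(G')=\Omega_G^{k-1}\cup\Gamma_H^{k-1}$, whence $\nu_k=n\,\nu_{k-1}=mn^k$. Applying Corollary~\ref{corollary_2} to $H[G']$ gives
\[
\sigma_L(H^k[G])=\left(\bigcup_{j_k\in[n]}\{m'\,d_H(j_k)+\mu:\mu\in\sigma_L(G')\setminus\{0\}\}\right)\cup\{m'\mu_j(H):1\le j\le n\},
\]
where $\sigma_L(G')\setminus\{0\}$ removes a single copy of $0$. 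Since $\mu_n(H)=0$ for every graph, the value $mn^{k-2}\mu_n(H)=0$ occurs in the last block $\{mn^{k-2}\mu_j(H):1\le j\le n\}$ of $\Gamma_H^{k-1}$; attributing the removed $0$ to that occurrence, $\sigma_L(G')\setminus\{0\}$ equals $\Omega_G^{k-1}$ together with $\Gamma_H^{k-1}$ in which that last block is replaced by $\{mn^{k-2}\mu_l(H):1\le l\le n-1\}$.

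It remains to split the outer union over $\mu$ into the part with $\mu\in\Omega_G^{k-1}$ and the part with $\mu$ in the truncated $\Gamma_H^{k-1}$, and to observe that $m'\,d_H(j_k)=mn^{k-1}d_H(j_k)$ is precisely the $i=k$ (resp.\ $r=k$) summand of the sums $m\sum_{i=1}^{k}n^{i-1}d_H(j_i)$ and $m\sum_{r=i}^{k}n^{r-1}d_H(j_r)$ appearing in $\Omega_G^k$ and $\Gamma_H^k$. Hence adjoining the index $j_k$ to the $(k-1)$-tuples of $\Omega_G^{k-1}$ and adding $mn^{k-1}d_H(j_k)$ turns that block into $\Omega_G^k$; the same move carries the $i$-th stratum of $\Gamma_H^{k-1}$ (indexed by $(j_i,\dots,j_{k-1})\in[n]^{k-i}$) to the $i$-th stratum of $\Gamma_H^k$ (indexed by $(j_i,\dots,j_k)\in[n]^{k-i+1}$) for $i=2,\dots,k-1$, and it carries the truncated last block $\{mn^{k-2}\mu_l(H):1\le l\le n-1\}$ to $\bigcup_{j_k\in[n]}\{mn^{k-2}\mu_l(H)+mn^{k-1}d_H(j_k):1\le l\le n-1\}$, which is exactly the $i=k$ stratum of $\Gamma_H^k$. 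Finally $\{m'\mu_j(H):1\le j\le n\}=\{mn^{k-1}\mu_j(H):1\le j\le n\}$ is the last block of $\Gamma_H^k$. Collecting the pieces gives $\sigma_L(H^k[G])=\Omega_G^k\cup\Gamma_H^k$, and the count $|\Omega_G^k|=n^k(m-1)$, $|\Gamma_H^k|=(n-1)\sum_{s=1}^{k-1}n^s+n=n^k$, summing to $mn^k=\nu_k$, confirms that nothing is lost or double-counted.

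The only genuinely delicate step — and the one most likely to cause trouble — is the treatment of ``$\setminus\{0\}$'' in Corollary~\ref{corollary_2}: one must pin down which copy of $0$ is discarded (using $\mu_n(H)=0$) so that the truncated last block of $\Gamma_H^{k-1}$ matches, after the degree shift $+\,mn^{k-1}d_H(j_k)$, the new $i=k$ stratum of $\Gamma_H^k$, and one should be sure that no additional zero — which appears when $H$ or $G$ is disconnected — is inadvertently removed. Everything else is a matter of matching the exponents of $n$ and the ranges of the tuple indices on the two sides.
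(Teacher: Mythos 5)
Your proof is correct and follows essentially the same route as the paper's: induction on $k$, writing $H^k[G]=H[H^{k-1}[G]]$ and applying Corollary~\ref{corollary_2} to reindex the shifted multisets $\Omega_G^{k-1}$ and $\Gamma_H^{k-1}$ into $\Omega_G^{k}$ and $\Gamma_H^{k}$. Your explicit bookkeeping of which copy of $0$ is removed (attributing it to $mn^{k-2}\mu_n(H)=0$ in the last block of $\Gamma_H^{k-1}$) and the final cardinality check are in fact slightly more careful than the paper's own write-up, which handles this point only implicitly.
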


\begin{proof}
Corollary~\ref{corollary_2} give us the assertion in case $k=1$.
Given an integer $k \geq 2$, let suppose that the $$
\sigma_L({H^{k-1}[G]}) = \Omega_G^{k-1} \cup \Gamma_H^{k-1},
$$
where
\begin{eqnarray*}
\Omega_G^{k-1} &=& \bigcup_{(j_1,\ldots, j_{k-1}) \in [n]^{k-1}} \left\{ \mu_l(G) + m \sum_{i=1}^{k-1} n^{i-1} d_H(j_i) : 1 \leq l \leq m-1 \right\}   \mbox{           and}\\
\Gamma_H^{k-1} &=& \bigcup_{i=2}^{k-1} \left(\bigcup_{(j_{i},\ldots, j_{k-1}) \in [n]^{k-i}}\left\{mn^{i-2}\mu_l(H) +
m \sum_{r=i}^{k-1} n^{r-1} d_H(j_r) : 1 \leq l \leq n-1 \right\}\right)\\
 \cup & & \left\{  m n^{k-2} \mu_j(H) : 1 \leq j \leq n \right\}.
\end{eqnarray*}
Then, by Corollary~\ref{corollary_2},
\begin{eqnarray*}
\sigma_L(H^k[G]) &=& \sigma_L(H[H^{k-1}[G]]) =
\end{eqnarray*}
 $$ =  \left(\bigcup_{j_{k}=1}^n \left\{ mn^{k-1}d_{H}(j_k) + x : x \in \Omega_G^{k-1} \right\}\right) \cup
                \left(\bigcup_{j_{k}=1}^n \left\{ mn^{k-1}d_{H}(j_k) + y : y \in \Gamma_H^{k-1} \right\}\right),$$
 where
 $$
 \bigcup_{j_{k}=1}^n \left\{ mn^{k-1}d_{H}(j_k) + x : x \in \Omega_G^{k-1} \right\}= $$
  $$ =   \bigcup_{j_{k}=1}^{n} {\left(\bigcup_{(j_1,\ldots, j_{k-1}) \in [n]^{k-1}} {\left\{
     mn^{k-1}d_{H}(j_k) + \mu_l(G) +m \sum_{i=1}^{k-1} n^{i-1} d_H(j_i)  :  1 \leq l \leq n\right\}}\right)}$$
  $$ = \bigcup_{(j_1,\ldots, j_{k-1}, j_k) \in [n]^{k}} {\left\{
      \mu_l(G) +m \sum_{i=1}^{k} n^{i-1} d_H(j_i)  :  1 \leq l \leq n\right\}}=\Omega_G^{k}  \  \  \mbox{ and       }
  $$
  $$  \bigcup_{j_{k}=1}^n \left\{ mn^{k-1}d_{H}(j_k) + y : y \in \Gamma_H^{k-1} \right\} =
  $$
  $$= \bigcup_{j_{k}=1}^n \left(\bigcup_{i=2}^{k-1} {\bigcup_{(j_{i},\ldots, j_{k-1}) \in [n]^{k-i}} {\left\{mn^{k-1}d_{H}(j_k) +mn^{i-2}\mu_l(H) + m \sum_{r=i}^{k-1} n^{r-1} d_H(j_r) :
   1 \leq l \leq n-1 \right\}}}\right.
    $$
    $$\cup  \left. \left\{m n^{k-1}d_{H}(j_k) +mn^{k-2}\mu_l(H) : 1 \leq l \leq n-1\right\} \right.\Bigg)\cup \{mn^{k-1}\mu_j(H) : 1 \leq j \leq n\}= $$
$$ =\bigcup_{i=2}^{k} {\bigcup_{(j_{i},\ldots, j_{k}) \in [n]^{k-i+1}} {\left\{mn^{i-2}\mu_l(H) + m \sum_{r=i}^{k} n^{r-1} d_H(j_r) : 1 \leq l \leq n-1 \right\}}}\cup
    $$
    $$  \{mn^{k-1}\mu_j(H) ; 1 \leq j \leq n\}= \Gamma_H^{k}\,.
$$
\end{proof}

As immediate consequence of the above theorem, for a  regular graph $H$  it follows

\begin{corollary}\label{cor_2_main_th_2_regular}
Let $G$ and $H$ as in Theorem~\ref{main_th_2}, with $H$ is $q$-regular. Then, for each integer $k \geq 1$,
 \begin{eqnarray*}
\sigma_L({H^k[G]}) &=& \left\{ \left(\mu_l(G) + mq \frac{n^{k} - 1}{n-1} \right)^{[n^k]}: 1 \leq l \leq m-1 \right\} \cup \{0\} \cup\\
& &  \    \bigcup_{i=2}^{k+1} {\left\{\left(mn^{i-2}\mu_l(H) +
mq  n^{i-1}\frac{n^{k-i+1}-1}{n-1}\right)^{[n^{k-i+1}]} : 1 \leq l \leq n-1 \right\}}.
\end{eqnarray*}
\end{corollary}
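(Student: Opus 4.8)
The plan is to obtain the statement directly from Theorem~\ref{main_th_2} by specializing $d_H(j)=q$ for every $j\in[n]$ and simplifying the two multisets $\Omega_G^k$ and $\Gamma_H^k$; no new induction is needed, since Theorem~\ref{main_th_2} has already been established.

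First I would simplify $\Omega_G^k$. When $H$ is $q$-regular, $m\sum_{i=1}^k n^{i-1}d_H(j_i)=mq\sum_{i=1}^k n^{i-1}=mq\frac{n^k-1}{n-1}$, a quantity that no longer depends on the tuple $(j_1,\dots,j_k)$. Consequently all $n^k$ inner sets indexed by $[n]^k$ coincide, and the union of multisets yields each value $\mu_l(G)+mq\frac{n^k-1}{n-1}$, $1\le l\le m-1$, with multiplicity $n^k$. This produces the first line of the asserted spectrum.

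Next I would treat $\Gamma_H^k$. In the summand indexed by $i$ of the outer union, the substitution $d_H(j_r)=q$ turns $m\sum_{r=i}^k n^{r-1}d_H(j_r)$ into $mq\sum_{r=i}^k n^{r-1}=mqn^{i-1}\frac{n^{k-i+1}-1}{n-1}$, again independent of $(j_i,\dots,j_k)$; so the inner union over $[n]^{k-i+1}$ collapses and contributes each value $mn^{i-2}\mu_l(H)+mqn^{i-1}\frac{n^{k-i+1}-1}{n-1}$, $1\le l\le n-1$, with multiplicity $n^{k-i+1}$, for $i=2,\dots,k$.

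It then remains to incorporate the boundary block $\{mn^{k-1}\mu_j(H):1\le j\le n\}$ of $\Gamma_H^k$. Since the all-ones vector lies in the kernel of $L_H$ we have $\mu_n(H)=0$, which accounts for the isolated eigenvalue $0$ in the corollary; the remaining values $mn^{k-1}\mu_l(H)$, $1\le l\le n-1$, each of multiplicity $1$, are exactly what the expression $mn^{i-2}\mu_l(H)+mqn^{i-1}\frac{n^{k-i+1}-1}{n-1}$ gives when $i=k+1$, because then the geometric factor $\frac{n^{0}-1}{n-1}$ vanishes and the multiplicity exponent is $n^{k-(k+1)+1}=1$. Folding this index $i=k+1$ into the union and extracting the single $0$ gives precisely the displayed formula for $\sigma_L(H^k[G])$. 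The only step needing care is this last piece of bookkeeping — recognizing that the boundary block is nothing but the $i=k+1$ term of the same family, and that exactly one zero eigenvalue must be pulled out; everything else is the evaluation of two finite geometric series.
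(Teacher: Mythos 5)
Your proposal is correct and follows essentially the same route as the paper: specialize Theorem~\ref{main_th_2} with $d_H(j)=q$, sum the resulting geometric series in $\Omega_G^k$ and $\Gamma_H^k$, and absorb the block $\{mn^{k-1}\mu_j(H):1\le j\le n\}$ as the $i=k+1$ term of the union together with the extracted eigenvalue $0$. No gaps; the bookkeeping at $i=k+1$ is exactly how the paper's proof handles it.
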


\begin{proof}
From Theorem~\ref{main_th_2}, for all integer $k \geq 1$, it follows that
$$
\sigma_L({H^k[G]}) = \Omega_G^k \cup \Gamma_H^k,
$$
where
\begin{eqnarray*}
\Omega_G^k &=& \bigcup_{(j_1,\ldots, j_k) \in [n]^{k}} \left\{ \mu_l(G) + mq \sum_{i=1}^k n^{i-1}  : 1 \leq l \leq m-1 \right\}   \\
&=&  \left\{ \left(\mu_l(G) + mq \frac{n^{k} - 1}{n-1} \right)^{[n^k]}: 1 \leq l \leq m-1 \right\}    \   \   \   \   \mbox{   and    }\\
\end{eqnarray*}
 \begin{eqnarray*}
\Gamma_H^k &=& \bigcup_{i=2}^{k} \left(\bigcup_{(j_{i},\ldots, j_k) \in [n]^{k-i+1}}\left\{mn^{i-2}\mu_l(H) +
mq \sum_{r=i}^k n^{r-1} : 1 \leq l \leq n-1 \right\}\right) \cup \\
             & & \left\{  m n^{k-1} \mu_j(H) : 1 \leq j \leq n \right\} \\
             &=& \bigcup_{i=2}^{k+1} {\left\{\left(mn^{i-2}\mu_l(H) +
mq  n^{i-1}\frac{n^{k-i+1}-1}{n-1}\right)^{[n^{k-i+1}]} : 1 \leq l \leq n-1 \right\}}\cup \{0\}.
\end{eqnarray*}
\end{proof}

Now, let us consider the case $G=H$.

\begin{corollary}\label{cor_3_main_th_2}
Let $H$ be a graph such that $V(H)=[n]$, with $\sigma_L(H)=\{\mu_1(H),$ $\ldots,$ $\mu_n(H)\}$. Then $H^k$ is a graph of order $\nu_k=n^k$  such that
\begin{eqnarray*}
\sigma_L(H^k) &=& \bigcup_{i=1}^{k-1} \left(\bigcup_{(j_{i},\ldots, j_{k-1}) \in [n]^{k-i}}\left\{n^{i-1}\mu_l(H) +
\sum_{r=i}^{k-1} n^{r} d_H(j_r) : 1 \leq l \leq n-1 \right\}\right) \cup \\
             & & \left\{  n^{k-1} \mu_j(H) : 1 \leq j \leq n \right\},
\end{eqnarray*}
for all $k \ge 2$.
\end{corollary}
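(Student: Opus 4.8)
The plan is to obtain Corollary~\ref{cor_3_main_th_2} as a specialization of Theorem~\ref{main_th_2}. The starting observation is that, by the recursive definition of the iterated lexicographic product (and associativity of $[\cdot]$), the power $H^k$ equals $H^{k-1}[H]$, i.e.\ it is exactly the graph $H^{k'}[G]$ appearing in Theorem~\ref{main_th_2} for $k'=k-1$ and $G=H$. In particular $m=|V(G)|=|V(H)|=n$, which already gives the order $\nu_k=m\,n^{k'}=n\cdot n^{k-1}=n^k$; and since the theorem is stated for $k'\ge 1$, this forces $k\ge 2$, in agreement with the hypothesis (the value $k=2$ being just Corollary~\ref{corollary_2} applied with $G=H$).

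The next step is purely substitution: write $\sigma_L(H^{k-1}[G])=\Omega_G^{k-1}\cup\Gamma_H^{k-1}$ as in Theorem~\ref{main_th_2} and set $G=H$, $m=n$ throughout. In $\Omega_G^{k-1}$ this replaces each $\mu_l(G)$ by $\mu_l(H)$ (for $1\le l\le n-1$) and turns the additive shift $m\sum_{i=1}^{k-1}n^{i-1}d_H(j_i)$ into $\sum_{i=1}^{k-1}n^{i}d_H(j_i)$. In $\Gamma_H^{k-1}$ the coefficients $mn^{i-2}$, $mn^{r-1}$ and $mn^{k-2}$ collapse to $n^{i-1}$, $n^{r}$ and $n^{k-1}$ respectively, and the index sets $[n]^{(k-1)-i+1}$ become $[n]^{k-i}$. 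So after this substitution one has, up to renaming the summation index,
\[
\Omega_G^{k-1}=\bigcup_{(j_1,\dots,j_{k-1})\in[n]^{k-1}}\Bigl\{\,n^{0}\mu_l(H)+\textstyle\sum_{r=1}^{k-1}n^{r}d_H(j_r)\ :\ 1\le l\le n-1\,\Bigr\}.
\]

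The only point that is not mechanical, and hence the (mild) main obstacle, is to recognize that this last family is precisely the ``$i=1$'' instance of the double union that already occurs in $\Gamma_H^{k-1}$: it fits the general term $n^{i-1}\mu_l(H)+\sum_{r=i}^{k-1}n^{r}d_H(j_r)$ over $(j_i,\dots,j_{k-1})\in[n]^{k-i}$ with $i=1$, using $n^{0}=1$. Merging $\Omega_G^{k-1}$ with the union $\bigcup_{i=2}^{k-1}(\cdots)$ coming from $\Gamma_H^{k-1}$ therefore produces the union $\bigcup_{i=1}^{k-1}(\cdots)$ of the statement, while the leftover block $\{\,n^{k-1}\mu_j(H):1\le j\le n\,\}$ of $\Gamma_H^{k-1}$ is exactly the last set in the statement. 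This yields the claimed formula for $\sigma_L(H^k)$ for every $k\ge 2$. An equivalent route, if one prefers to avoid reindexing, is a direct induction on $k$: write $H^k=H[H^{k-1}]$ (again by associativity), apply Corollary~\ref{corollary_2} with inner graph $G=H^{k-1}$, and carry out the same bookkeeping as in the inductive step of the proof of Theorem~\ref{main_th_2}.
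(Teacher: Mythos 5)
Your proposal is correct and follows essentially the same route as the paper: both write $H^k=H^{k-1}[H]$, apply Theorem~\ref{main_th_2} with $G=H$ (so $m=n$) and exponent $k-1$, and then observe that the resulting $\Omega_G^{k-1}$ is exactly the $i=1$ term of the union in $\Gamma_H^{k-1}$'s pattern, which merges the two families into $\bigcup_{i=1}^{k-1}(\cdots)$ plus $\{n^{k-1}\mu_j(H):1\le j\le n\}$. The substitutions and reindexing you carry out match the paper's computation, so no gap remains.
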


\begin{proof}
The first statement is obvious. Regarding the second statement, applying again Theorem~\ref{main_th_2} for $k \geq 2$ we obtain
$$\sigma_L(H^k)  =  \sigma_L(H^{k-1}[H])=$$
              $$=  \bigcup_{(j_1,j_2,\ldots,j_{k-1}) \in [n]^{k-1}}\left\{{\mu}_l(H) +n\sum_{i=1}^{k-1} n^{i-1}d_H(j_i) : 1 \leq l \leq n-1 \right\}\cup $$
              $$ \bigcup_{i=2}^{k-1} \left( \bigcup_{(j_i,\ldots,j_{k-1})\in [n]^{k-i}}\left\{ n^{i-1}{\mu_l}(H)+ n\sum_{r=i}^{k-1}  n^{r-1} d_H(j_r) : 1 \leq l \leq n-1\right\}\right) \cup $$
               $$ \{n n^{k-2}{\mu_j}(H) : 1 \leq j \leq n \}=$$
                $$=\bigcup_{i=1}^{k-1} \left( \bigcup_{(j_i,\ldots,j_k) \in [n]^{k-i}}\left\{n^{i-1}{\mu_l}(H) +\sum_{r=i}^{k-1}n^{r} d_H(j_r) : 1 \leq l \leq n-1 \right\} \right) \cup $$
                $$\{ n^{k-1}{\mu_j}(H): 1 \leq j \leq n\}.
$$
\end{proof}

Finally, the next proposition determines the algebraic connectivity and the largest Laplacian eigenvalue of $H^k$, for $k \ge 1$.

\begin{proposition}\label{algebraic_connectivity}
If $H$ is a connected graph of order $n$ with $\sigma_L(H)=\{{\mu_1}(H),\ldots,$ ${\mu_n}(H)\}$ and $k \ge 1$, then
\begin{eqnarray}
\mu_{n^{k}-1}(H^k) &=& n^{k-1}\mu_{n-1}(H)  \  \  \  \   \mbox{ and }\label{algebraic_connectivity}\\
\mu_1(H^k)         &=& n^{k-1}\mu_{1}(H)  \label{laplacian_index}
\end{eqnarray}
\end{proposition}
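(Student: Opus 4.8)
The plan is to extract both identities directly from the explicit description of $\sigma_L(H^k)$ given in Corollary~\ref{cor_3_main_th_2}, by identifying which of the listed eigenvalues is the largest and which is the smallest nonzero one. Write the spectrum of $H^k$ as the union, over $i=1,\dots,k-1$ and over tuples $(j_i,\dots,j_{k-1})\in[n]^{k-i}$, of the numbers $n^{i-1}\mu_l(H)+\sum_{r=i}^{k-1}n^r d_H(j_r)$ with $1\le l\le n-1$, together with the block $\{n^{k-1}\mu_j(H):1\le j\le n\}$. Note $H$ connected forces $d_H(j_r)\ge 1$ for every vertex, and $H^k$ is connected, so $\mu_{n^k}(H^k)=0$ occurs with multiplicity one; that zero is exactly the term $n^{k-1}\mu_n(H)=0$ in the last block, and all other listed numbers are strictly positive. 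Hence $\mu_{n^k-1}(H^k)$ is the minimum over all the remaining (strictly positive) listed values, and $\mu_1(H^k)$ is their maximum.

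For the largest eigenvalue, I would argue that every number of the form $n^{i-1}\mu_l(H)+\sum_{r=i}^{k-1}n^r d_H(j_r)$ is bounded above by $n^{k-1}\mu_1(H)$, and that this bound is attained in the last block. The attainment is clear: take $j=1$ with $\mu_1(H)=\mu_1(H)$, giving $n^{k-1}\mu_1(H)$. For the upper bound, use $\mu_l(H)\le\mu_1(H)$ and the crude estimate $d_H(j_r)\le n-1$ together with $\mu_1(H)\ge\frac{n}{n-1}\delta(H)\ge\frac{n}{n-1}$; more carefully, since $\mu_1(H)\ge \Delta(H)+1$ for any graph with at least one edge, one gets $\sum_{r=i}^{k-1}n^r d_H(j_r)\le (n-1)\sum_{r=i}^{k-1}n^r = n^{k}-n^{i}$, and $n^{i-1}\mu_l(H)+n^{k}-n^{i}\le n^{i-1}\mu_1(H)+n^k-n^i$, so it suffices to check $n^{i-1}\mu_1(H)+n^k-n^i\le n^{k-1}\mu_1(H)$, i.e. $(n^{k-1}-n^{i-1})\mu_1(H)\ge n^k-n^i=n(n^{k-1}-n^{i-1})$, which holds because $\mu_1(H)\ge n$ is false in general — so instead I would use $\mu_1(H)\ge\Delta(H)+1$ and the sharper bound $d_H(j_r)\le\Delta(H)$, giving $\sum_{r=i}^{k-1}n^r d_H(j_r)\le \Delta(H)(n^k-n^i)/(n-1)$ and reducing to $(n^{k-1}-n^{i-1})\mu_1(H)\ge \Delta(H)\frac{n^k-n^i}{n-1}=\Delta(H)\,n^{i-1}\frac{n^{k-i+1}-n}{n-1}$; after dividing, this is $(n^{k-i}-1)\cdot n^{... }$ — the cleanest route is to induct on $k$ using Corollary~\ref{corollary_2} instead.

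Indeed, I would prefer the inductive route for both parts, since it avoids the bookkeeping above. For $k=1$ both identities are trivial. Assuming them for $k-1$, apply Corollary~\ref{corollary_2} with the outer factor $H$ of order $n$ and inner factor $H^{k-1}$ of order $n^{k-1}$: the Laplacian spectrum of $H^k=H[H^{k-1}]$ is
$$
\Bigl(\bigcup_{j=1}^{n}\{n^{k-1}d_H(j)+\mu_i(H^{k-1}):1\le i\le n^{k-1}-1\}\Bigr)\cup\{n^{k-1}\mu_1(H),\dots,n^{k-1}\mu_n(H)\}.
$$
The largest element of the second block is $n^{k-1}\mu_1(H)$. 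In the first block, each entry is at most $n^{k-1}\Delta(H)+\mu_1(H^{k-1})=n^{k-1}\Delta(H)+n^{k-2}\mu_1(H)$ by the induction hypothesis, and since $\mu_1(H)\ge\Delta(H)+1$ (a standard bound, as $H$ has an edge) one checks $n^{k-1}\Delta(H)+n^{k-2}\mu_1(H)\le n^{k-1}\mu_1(H)$, i.e. $n^{k-2}\mu_1(H)\le n^{k-1}(\mu_1(H)-\Delta(H))$, which follows from $\mu_1(H)-\Delta(H)\ge 1\ge\mu_1(H)/n$ provided $\mu_1(H)\le n$; as $\mu_1(H)\le n$ always holds, this is fine. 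Hence $\mu_1(H^k)=n^{k-1}\mu_1(H)$, proving \eqref{laplacian_index}. Symmetrically, for \eqref{algebraic_connectivity} note every entry of the first block is at least $n^{k-1}\cdot 1+\mu_{n^{k-1}-1}(H^{k-1})=n^{k-1}+n^{k-2}\mu_{n-1}(H)>n^{k-1}\mu_{n-1}(H)$ (using $d_H(j)\ge1$ and the induction hypothesis $\mu_{n^{k-1}-1}(H^{k-1})=n^{k-2}\mu_{n-1}(H)$, together with $n^{k-2}\mu_{n-1}(H)\le n^{k-1}$ since $\mu_{n-1}(H)\le n$), while the smallest nonzero entry of the second block is exactly $n^{k-1}\mu_{n-1}(H)$ because $n^{k-1}\mu_n(H)=0$; hence the second smallest Laplacian eigenvalue of $H^k$ is $n^{k-1}\mu_{n-1}(H)$.

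The main obstacle I anticipate is the comparison step: showing that the "mixed" entries $n^{k-1}d_H(j)+\mu_i(H^{k-1})$ never beat $n^{k-1}\mu_1(H)$ from above, nor fall below $n^{k-1}\mu_{n-1}(H)$ from below. This is where one must invoke the elementary bounds $\Delta(H)+1\le\mu_1(H)\le n$ and $\mu_{n-1}(H)\le n$ (or $\mu_{n-1}(H)\le\frac{n}{n-1}\delta(H)$) and combine them with the induction hypothesis; the inequalities are true but need to be stated carefully, since they are exactly the point where the connectedness of $H$ (hence $\mu_{n-1}(H)>0$, and $H$ having an edge) is used.
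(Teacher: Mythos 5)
Your inductive argument for the Laplacian index \eqref{laplacian_index} is sound: feeding $G=H^{k-1}$ into Corollary~\ref{corollary_2} and using $\Delta(H)+1\le\mu_1(H)\le n$ together with the induction hypothesis does give $n^{k-1}d_H(j)+\mu_i(H^{k-1})\le n^{k-1}\Delta(H)+n^{k-2}\mu_1(H)\le n^{k-1}\mu_1(H)$, and this induction is a legitimate alternative to the paper's route, which instead reads the whole spectrum of $H^k$ off Corollary~\ref{cor_3_main_th_2} and compares the listed values in closed form (your first, non-inductive attempt is abandoned midway, so I assess only the inductive one).

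The algebraic-connectivity half, however, has a genuine gap. You bound each mixed entry below by $n^{k-1}\cdot 1+\mu_{n^{k-1}-1}(H^{k-1})=n^{k-1}+n^{k-2}\mu_{n-1}(H)$ and then claim this exceeds $n^{k-1}\mu_{n-1}(H)$, citing $\mu_{n-1}(H)\le n$. That comparison is equivalent to $(n-1)\mu_{n-1}(H)\le n$, which fails for most connected graphs: for $H=K_n$ one has $\mu_{n-1}(H)=n$, so your lower bound is $2n^{k-1}$ while the target is $n^k$; even $H=C_5$ fails, since $\mu_4(C_5)=2-2\cos(2\pi/5)>5/4$. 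The cited fact $n^{k-2}\mu_{n-1}(H)\le n^{k-1}$ bounds the wrong quantity and does not yield the claimed inequality (and the strict ``$>$'' cannot hold anyway: for $K_n$ the mixed entries equal $n^{k-1}\mu_{n-1}(H)$ exactly). The loss happens at the step $d_H(j)\ge 1$, which throws away too much degree. The repair --- and what the paper does --- is to compare degrees with the algebraic connectivity itself: using $d_H(j)\ge\delta(H)\ge\mu_{n-1}(H)$ every mixed entry is at least $n^{k-1}\mu_{n-1}(H)+\mu_i(H^{k-1})\ge n^{k-1}\mu_{n-1}(H)$, which closes the induction; note that $\mu_{n-1}(H)\le\delta(H)$ is Fiedler's bound and holds for non-complete $H$, the complete case being settled directly from $K_n^k=K_{n^k}$. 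Your closing remark correctly anticipates that this comparison is the crux, but the inequality you actually invoke there ($\mu_{n-1}(H)\le n$, or $\mu_{n-1}(H)\le\tfrac{n}{n-1}\delta(H)$) is not strong enough to carry it out.
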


\begin{proof}
Let $k \geq 1$ be fixed. From Corollary~\ref{cor_3_main_th_2}, it follows that the second least eigenvalue of $H^k$
is among the values $n^{k-1}\mu_{n-1}(H)$ and $n^{i-1}\mu_{n-1}(H) + \sum_{r=i}^{k-1} n^rd_H(j_r)$ for $1 \leq i \leq k-1$.
  We may recall that $\delta(H) \geq \mu_{n-1}(H)$; then,
 for all $1 \leq i \leq k-1$, it holds that
$$
n^{i-1}\mu_{n-1}(H) + \sum_{r=i}^{k-1} n^rd_H(j_r) \geq n^{i-1}\mu_{n-1}(H) + \sum_{r=i}^{k-1} n^r \delta(H) $$ $$ \geq  n^{i-1}\mu_{n-1}(H) + \mu_{n-1}(H)\sum_{r=i}^{k-1} n^r
        =\mu_{n-1}(H)\left( n^{i-1} +  \sum_{r=i}^{k-1} n^r\right)
$$
$$
= \ \mu_{n-1}(H)\sum_{r=i-1}^{k-1} n^r =   \mu_{n-1}(H)\sum_{r=i }^{k } n^{r-1} \  \geq  \ \mu_{n-1}(H)n^{k-1}.
$$
Thus the equality \eqref{algebraic_connectivity} is proved. Now, let us prove the equality \eqref{laplacian_index}.\\
Applying again Corollary~\ref{cor_3_main_th_2}, it follows that the largest Laplacian eigenvalue of $H^k$ is among the values $n^{k-1}\mu_{1}(H)$ and
$n^{i-1}\mu_{1}(H)+\sum_{r=i}^{k-1}{n^r d_H(j_r)}$, $1 \leq i \leq k-1$. Since
$\mu_1(H) \ge \Delta(H)+1$, for $1 \le i \le k-1$, it follows that
\begin{eqnarray*}
n^{i-1}\mu_{1}(H)+\sum_{r=i}^{k-1}{n^r d_H(j_r)} & \le & n^{i-1}\mu_{1}(H)+\sum_{r=i}^{k-1}{n^r(\mu_1(H)-1)}\\
                                                 &  =  & \mu_{1}(H)\sum_{r=i-1}^{k-1}{n^r}-\sum_{r=i}^{k-1}{n^r}\\
                                                 &  =  & \mu_{1}(H)n^{i-1}\frac{n^{k-i+1}-1}{n-1} - n^i \frac{n^{k-i}-1}{n-1}\\
                                                 &  =  & \mu_{1}(H)n^{i-1}(\frac{n^{k-i}-1}{n-1} + n^{k-i}) - n^i \frac{n^{k-i}-1}{n-1}\\
                                                 &  =  & \mu_{1}(H)n^{k-1} + \frac{n^{k-i}-1}{n-1}n^{i-1}(\mu_{1}(H)-n)\\
                                                 & \le & n^{k-1}\mu_{1}(H)
\end{eqnarray*}
The last inequality is obtained taking into account that $\mu_{1}(H)-n \le 0$.
\end{proof}

\section{Spectral and combinatorial invariant properties of lexicographic powers of graphs}\label{extensions}

In this section, a few well known spectral and combinatorial invariant properties of a graph $H$ are extended to the lexicographic powers of $H$.
For instance, considering that $H$ has order $n \ge 2$, for all $k \geq 1$, we may deduce that
\begin{equation}
\delta(H^k) = \delta(H) \frac{n^{k}-1}{n-1} \qquad \left(\Delta(H^k) = \Delta(H) \frac{n^{k}-1}{n-1}\right).\label{minimum_maximum_degrees}
\end{equation}
Notice that since $H$ has order $n$, then $H^k$ has order $n^k$. The equalities \eqref{minimum_maximum_degrees} can be proved by induction on $k$, taking into
account that they are obviously true for $k=1$. Assuming that the equalities \eqref{minimum_maximum_degrees} are true for $k-1$, with $k \ge 2$, it is immediate
that a vertex of $H^k$ with minimum (maximum) degree is a minimum (maximum) degree vertex of the copy of $H^{k-1}$ located in the position of a minimum (maximum)
degree vertex of $H$, and then its  degree in $H^k$ is equal to $\delta(H) (\frac{n^{k-1}-1}{n-1}+n^{k-1})$ \Big($\Delta(H)( \frac{n^{k-1}-1}{n-1}+n^{k-1})$\Big).\\

For an arbitrary graph $G$, let $q_1(G)$ and $q_n(G)$ be the largest and the least eigenvalue of the signless Laplacian matrix of $G$ (that is, the matrix $A_G+D$),
respectively. Taking into account the relations  $2\delta(G) \le  q_1(G) \le 2\Delta(G)$, which were proved in \cite{CveRowSimic2007}, and also the inequality
$q_n(G) < \delta(G)$ \cite{Das2010}, for the lexicographic power $k$ of a graph $H$ we obtain  the inequalities
\begin{eqnarray*}
2 \delta(H) \frac{n^k-1}{n-1} \; \le \: q_1(H^k) & \le & 2 \Delta(H)\frac{n^k-1}{n-1}  \  \  \   \  \   \   \mbox{and} \\
q_n(H^k) & < & \delta(H) \frac{n^k-1}{n-1}.
\end{eqnarray*}

Denoting the distance between two vertices $x$ and $y$ in $G$ by $d_G(x,y)$ and the diameter of $G$ by $\text{diam}(G)$, we may conclude the following interesting result concerning the diameter of the iterated lexicographic products of graphs.

\begin{proposition}
Let $H$ be a connected not complete graph and let $G$ be an arbitrary graph of order $m$. For very $k \in \mathbb{N}$
$$
diam(H^{k+1}) = diam(H^k[G]) = \text{diam}(H).
$$
\end{proposition}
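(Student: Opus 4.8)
The whole statement reduces to a single fact about one lexicographic product, applied twice. First I would record a distance lemma: if $H$ is connected with at least two vertices and $G'$ is any nonempty graph, then for vertices $(x_1,y_1),(x_2,y_2)$ of $H[G']$,
$$
d_{H[G']}\big((x_1,y_1),(x_2,y_2)\big)=\begin{cases} d_H(x_1,x_2), & x_1\neq x_2,\\ 0, & (x_1,y_1)=(x_2,y_2),\\ 1, & x_1=x_2,\ y_1y_2\in E(G'),\\ 2, & x_1=x_2,\ y_1\neq y_2,\ y_1y_2\notin E(G').\end{cases}
$$
For the first line, the bound $d_{H[G']}\ge d_H(x_1,x_2)$ follows by projecting any walk onto the first coordinate: consecutive vertices of a walk in $H[G']$ have first coordinates that are equal or adjacent in $H$, so after deleting repetitions one obtains a walk in $H$ of no greater length; the reverse bound follows by lifting a shortest $x_1$--$x_2$ path $x_1=z_0,z_1,\dots,z_d=x_2$ of $H$ to the path $(z_0,y_1),(z_1,y_1),\dots,(z_{d-1},y_1),(z_d,y_2)$ in $H[G']$. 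For the last line, non-adjacency in $H[G']$ is immediate from the definition of the lexicographic product, and a path of length $2$ exists because $x_1$ has a neighbour $z$ in $H$ (here one uses that $H$ is connected on $\ge 2$ vertices), namely $(x_1,y_1),(z,y_1),(x_1,y_2)$.

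Since all these distances are finite, $H[G']$ is connected and its diameter equals $\max\{\,d_H(x_1,x_2):x_1\neq x_2\,\}$ together with at most the value $2$ contributed by same-fibre pairs; that is, $\text{diam}(H[G'])=\max\{\text{diam}(H),\varepsilon\}$ with $\varepsilon\in\{0,1,2\}$ depending only on whether $G'$ has a single vertex, is complete, or is neither. If, in addition, $H$ is not complete, then $H$ has two non-adjacent vertices, so $\text{diam}(H)\ge 2\ge\varepsilon$, and therefore
$$
\text{diam}(H[G'])=\text{diam}(H)\qquad\text{for every nonempty graph }G'.
$$

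Finally I would apply this twice, for a fixed $k\ge 1$. By definition $H^k[G]=H[H^{k-1}[G]]$, and $H^{k-1}[G]$ is a nonempty graph (on $mn^{k-1}$ vertices), so the displayed identity with $G'=H^{k-1}[G]$ gives $\text{diam}(H^k[G])=\text{diam}(H)$. Likewise, writing $H^{k+1}$ as a lexicographic product of $k+1$ copies of $H$ and regrouping by associativity yields $H^{k+1}=H[H^k]$; taking $G'=H^k$ gives $\text{diam}(H^{k+1})=\text{diam}(H)$. Combining the two equalities proves the proposition. The only real work is the distance lemma, and within it the only delicate points are (i) making the walk-projection lower bound $d_{H[G']}\ge d_H$ precise, and (ii) the same-fibre case, where one must check the distance is \emph{exactly} $2$ --- this is where connectedness of $H$ and $|V(H)|\ge 2$ are genuinely used (the sub-case $|V(G')|=1$, in which same-fibre pairs do not occur, should be noted separately but is trivial). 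I do not expect $G$ to enter beyond being nonempty, which explains why the single constant $\text{diam}(H)$ governs both $H^k[G]$ and $H^{k+1}$.
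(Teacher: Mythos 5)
Your proposal is correct and follows essentially the same route as the paper: both reduce to analysing distances in a single lexicographic product $H[G']$ (same-fibre pairs at distance at most $2$ via a neighbour of the common $H$-coordinate, different-fibre pairs via lifting a shortest path of $H$), and then use that $\mathrm{diam}(H)\ge 2$ because $H$ is connected and not complete. The only difference is one of polish: you isolate an exact distance formula and make the lower bound $d_{H[G']}\ge d_H$ explicit by projecting walks onto the first coordinate, a step the paper's proof leaves implicit, so no change of approach is involved.
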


\begin{proof}
Consider $V(H)=\{1, \dots, n\}$ and $x, y \in V(H^k[G])$ ($x, y \in V(H^{k+1})$). Then we have two cases (a) they are both in
the same copy of $H^{k-1}[G]$ ($H^{k}$) located in the position of the vertex $i \in V(H)$ or (b) they are in
different copies of $H^{k-1}[G]$ ($H^{k}$) located in the positions of the vertices $r, s \in V(H)$.
\begin{enumerate}
\item[(a)] If $x$ and $y$ are adjacent, then $d_{H^k[G]}(x,y)=1$ ($d_{H^{k+1}}(x,y)=1$), otherwise since there exists a vertex
           $j \in V(H)$ such that $ij \in E(H)$ and then there is a path $x,z,y$, where $z$ is a vertex of the copy of $H^{k-1}[G]$
           ($H^k$) located in the position of the vertex $j \in V(H)$. Therefore, $d_{H^k[G]}(x,y)=2$ ($d_{H^{k+1}}(x,y)=2$).
\item[(b)] In this case, assuming that $r, j_1, \dots, j_t, s$ is a shortest path in $H$ connecting the vertices $r$ and $s$, there are vertices
           $z_1, \dots, z_t$ in the copies of $H^{k-1}[G]$ ($H^k$) located in the positions of the vertices $j_1, \dots, j_t$, respectively,
           such that $x, z_1, \dots, z_t, y$ is a path of length $d_H(r,s)$.
\end{enumerate}
\end{proof}

\subsection{The stability number}
Regarding the stability number $\alpha(G)$ (the maximum cardinality of a vertex subset of an arbitrary graph $G$ with pairwise nonadjacent vertices),
according to \cite{GellerStahl75}, $\alpha(H[G]) = \alpha(H)\alpha(G)$ for an arbitrary graph $H$. Thus we may conclude that $\alpha(H^k)=\alpha(H)^k$
(and, denoting the complement of graph $F$ by $\overline{F}$ and the clique number by $\omega(F)$,
since $\overline{H[G]}= \overline{H}[\overline{G}]$, $\omega(H^k) = \omega(H)^k$). Furthermore, from the spectral upper bound
$\alpha(G) \le n \frac{\mu_1(G) - \delta(G)}{\Delta(G)},$ independently deduced in \cite{LLT2007} and \cite{GN08} for an arbitrary graph $G$, and taking
into account \eqref{minimum_maximum_degrees} and \eqref{laplacian_index},  considering the $k$-th lexicographic power of a graph $H$ of order $n$ we obtain
\begin{equation*}
\alpha(H^k) \le n^k \frac{\mu_1(H^k) - \delta(H^k)}{\Delta(H^k)} \le n^k \frac{\frac{n-1}{n^k-1}n^{k-1}\mu_1(H)- \delta(H)}{\Delta(H)}.
\end{equation*}

\subsection{The vertex connectivity}
Considering a graph $G$ of order $m$ and a graph $H$ of order $n$, it is well known that the lexicographic product $H[G]$ is connected if and only if $H$
is a connected graph \cite{harary_wilcox_67}. On the other hand, according to \cite{GellerStahl75}, if both $G$ and $H$ are not complete, then
$\upsilon(H[G])=m \upsilon(H)$, where $\upsilon(H)$ denotes the vertex connectivity of $H$ (that is, the minimum number of vertices whose
removal yields a disconnected graph). Therefore, $\upsilon(H^k) = n^{k-1}\upsilon(H).$ Furthermore, we may conclude that when $H$ is connected not complete
(and then $H^k$ is also connected not complete),
$$
n^{k-1}\mu_{n-1}(H) \le \upsilon(H^k) \le \delta(H)\frac{n^{k}-1}{n-1}.
$$
In fact, it should be noted that $\upsilon(G) \le \delta(G)$ and, when $G$ is not complete, $\mu_{n-1}(G) \le \upsilon(G)$, see \cite{Fiedler73}.
Therefore, taking into account \eqref{algebraic_connectivity} and \eqref{minimum_maximum_degrees} we obtain
$n^{k-1}\mu_{n-1}(H) = \mu_{n-1}(H^k) \le \upsilon(H^k) \le \delta(H^k) = \delta(H)\frac{n^k-1}{n-1}$.

\subsection{The chromatic number}
Concerning the relations of the chromatic number of a graph $G$ of order $n$ with its spectrum, the following
lower bound due to Hoffman in \cite{hoffman_79} is well known.
\begin{eqnarray*}
\chi(G) & \ge & 1 - \frac{\lambda_1(G)}{\lambda_n(G)}. \label{hoffman}
\end{eqnarray*}
As direct consequence, if a graph $H$ is $q$-regular of order $n$, taking into account the Remark~\eqref{least_eigenvalue_power_k},
we may conclude the following lower bound on the chromatic number of $H^k$:
\begin{eqnarray*}
\chi(H^k) \ge 1 - \frac{r_k}{\lambda_{n^k}(H^{k})} &=& 1 - q\frac{n^k-1}{(n-1)\left(n^{k-1}\lambda_n(H)+q\frac{n^{k-1}-1}{n-1}\right)}\\
                                                   &=& 1 - \frac{n^k-1}{n^{k-1}\left((n-1)\frac{\lambda_n(H)}{q}+1\right) - 1}.
\end{eqnarray*}

\section*{Acknowledgement}
The research of Nair Abreu is partially  supported by Project Universal CNPq 442241/2014 and Bolsa PQ 1A CNPq, 304177/2013-0.
The research of Domingos M. Cardoso and Paula Carvalho is supported by the Portuguese Foundation for Science
and Technology (\textquotedblleft FCT-Funda\c c\~ao para a Ci\^encia e a Tecnologia\textquotedblright),
through the CIDMA - Center for Research and Development in Mathematics and Applications, within project
UID/MAT/04106/2013. Cybele Vinagre thanks the support of FAPERJ,  through APQ5 210.373/2015 and the hospitality of
Department of Mathematics of University of Aveiro, Portugal, where this paper was finished.

\end{document}